\newtheorem{theorem}{Theorem}[section]
\newtheorem{lemma}[theorem]{Lemma}
\newtheorem{example}[theorem]{Example}
\newtheorem{corollary}[theorem]{Corollary}
\newtheorem{remark}[theorem]{Remark}
\newtheorem*{theorem*}{Theorem}
\newtheorem{maintheorem}{Theorem}
\newcommand{\Z}{{\mathbb Z}}
\newcommand{\GEN}[1]{\left\langle #1 \right\rangle}
\newcommand{\ZZ}{\mathrm{Z}}
\newcommand{\VSd}{V(S_0)}
\newcommand{\VSdd}{V}
\newcommand{\qand}{\quad \text{and} \quad}
\title[On the modular isomorphism problem for groups with center of index at most $p^3$]{On the modular isomorphism problem  \\ for groups with center of index at most $p^3$}
 \author{Sofia   Brenner}
\address{Department  of  Mathematics,   TU  Darmstadt,   Germany.}
\email{brenner@mathematik.tu-darmstadt.de}
 \author{Diego García-Lucas}
 \address{Departamento de Matem\'aticas, Universidad de Murcia, Spain}
 \email{diego.garcial@um.es}
\thanks{The research of the first author has received funding from the European Research Council (ERC) under
	the European Unions Horizon 2020 research and innovation programme (EngageS: grant agreement No.~820148). The research of the second author was partially supported by Grant PID2020-113206GB-I00 funded by MCIN/AEI/10.13039/501100011033  and by Grant Fundación Séneca 22004/PI/22.}
\keywords{Finite $p$-groups, modular group algebra, small group algebra,  modular isomorphism problem.}
\subjclass{20C05, 16S34}
\date{\today}
\begin{document}
	
	\maketitle
	
	\begin{abstract}
	Let $p$ be an odd prime number. We show that the modular isomorphism problem has a positive answer for finite $p$-groups whose center has index $p^3$, which is a strong contrast to the analogous situation for $p = 2$.
	\end{abstract}

\section{Introduction} 

Let $p$ be a prime number and let $F$ be the field with $p$ elements. In this paper, we study the modular isomorphism problem, which can be stated as follows: \medskip

\emph{Consider finite $p$-groups $G$ and $H$ such that the group algebras $FG$ and $FH$ are isomorphic. Is it true that $G$ and $H$ are isomorphic?}
\medskip

%\ChSofia{should we comment on the choice of the field?}\ChDiego{[I don't think so: our result is restricted to the prime field, as is the original statement of the problem.]}

In the past 50 years, this problem has seen extensive research. There are many classes of finite $p$-groups for which it is known to have a positive answer, such as the class of abelian  $p$-groups \cite{Deskins1956}, the class of $p$-groups  of nilpotency class $2$ with elementary abelian derived subgroup \cite{San89}, or the class of metacyclic groups \cite{BaginskiMetacyclic, San96}.
  The modular isomorphism problem has a positive answer for groups with a center of index $p^2$. This follows from the earlier-mentioned result by Sandling \cite{San89}. Moreover, the generalization of the problem for this class of groups to arbitrary fields of characteristic $p$ has a positive solution, as shown by    \cite{Drensky}. The modular isomorphism problem is also known to have positive answer for groups of order dividing $p^5$ for any prime \cite{Passman1965p4, SalimSandlingp5}, for groups of order dividing $2^8$, for groups of order dividing $3^7$ and, except for a few groups, for groups of order dividing $5^6$ \cite{Eick08, MM20}. %\ChSofia{I like that.}
 Some recent results are for example \cite{BK19,MS22}, and a complete overview can be found in the recent survey~\cite{Mar22}. 
\medskip

Despite all positive results, it was shown in \cite{GarciaMargolisdelRio} that the general answer to this problem is negative. The smallest counterexample consists of a pair of groups of order $2^9$ with centers of index $8$.  For $p > 2$, the modular isomorphism problem is still open.
\medskip

 For a $p$-group $G$, we write $\ZZ(G)$ for its center, $\Phi(G)$ for its Frattini subgroup and $(\gamma_i(G))_{i \geq 1}$ for its lower central series. Motivated by the counterexample of \cite{GarciaMargolisdelRio} for $p = 2$, we now study finite $p$-groups with a center of index $p^3$ for $p > 2$. In contrast to the situation for $p = 2$, it turns out that the modular isomorphism problem has a positive answer for this class of groups. Our main result is the following: 
 
\begin{maintheorem}\label{theo:maintheorema}
	Let $p>2$ be a prime integer, let $F$ be the field with $p$ elements, and let $G$ and $H$ be  finite $p$-groups. Suppose that $|G:\ZZ(G)|=p^3$. If $FG\cong FH$, then $G\cong  H$. 
\end{maintheorem}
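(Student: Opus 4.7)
The overall plan is to exploit the rigidity of the condition $|G:Z(G)|=p^3$ to reduce the problem to a finite case analysis, and then to verify that in each case enough structural features of $G$ can be recovered from $FG$ to pin down its isomorphism type.

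First, since both $|G|$ and $|Z(G)|$ are classical $FG$-invariants, we have $|H:Z(H)|=p^3$ as well, so it suffices to analyse the class of $p$-groups of that shape. The group $G$ must be non-abelian, hence $G/Z(G)$ is a non-cyclic group of order $p^3$; for odd $p$ there are exactly four such groups, namely $C_p^3$, $C_{p^2}\times C_p$, the Heisenberg group of exponent $p$, and the non-abelian group of order $p^3$ of exponent $p^2$. Accordingly $G$ has nilpotency class $2$ or $3$, and the class is itself an $FG$-invariant via the Jennings series, so $G$ and $H$ share the same class.

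Second, using well-known invariants of $FG$ — in particular $G/G'$, the exponent of $G$, and the structure of $G'$ together with its inclusion in $Z(G)$ — I would pin down the isomorphism types of $G/Z(G)$ and of the abelian group $Z(G)$. The isomorphism type of $Z(G)$ should be recoverable from $Z(FG)$, which contains $FZ(G)$ as a distinguished subalgebra. This reduces the problem to determining the central extension class of $G$ by $Z(G)$ over $G/Z(G)$, within a very short list of possibilities.

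The main obstacle, and the technical heart of the argument, lies in the class-$3$ cases where $G/Z(G)$ is non-abelian of order $p^3$. The class-$2$ cases should yield to existing machinery for $p$-groups of small class (in the spirit of Sandling's results, or by direct analysis of the commutator pairing $G/Z(G)\times G/Z(G)\to G'$). In the class-$3$ case, the existence of counterexamples for $p=2$ of precisely this shape \cite{GarciaMargolisdelRio} shows that no purely formal argument can work; some feature specific to odd characteristic must be exploited. Natural candidates are the behaviour of $p$-th power maps on the unit group $1+J(FG)$, or refined invariants coming from the small group algebra alluded to in the paper's keywords, which should be sharp enough to separate the remaining non-isomorphic extensions.
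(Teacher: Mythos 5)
Your proposal correctly carries out the easy reductions (transferring $|G:\ZZ(G)|=p^3$ to $H$, the case analysis on $G/\ZZ(G)$, and the observation that the class-$2$ case falls to Sandling — though to invoke Sandling's theorem you should note that $\gamma_2(G)$ is elementary abelian here, which requires the small computation with $[x,b_i^p]$ that the paper does in its Lemma~\ref{lemma:descriptionp3}). But the class-$3$ case, which you yourself identify as ``the main obstacle and the technical heart,'' is not proved: you only list ``natural candidates'' for invariants that ``should be sharp enough.'' That case is the entire content of the paper. The actual argument is Theorem~\ref{theo:maintheoremb}: one passes to the quotient $\VSd$ of the small group algebra's unit group by $\overline{\gamma_4(G)}$, uses the decomposition $\VSd=\pi_0(H)\rtimes A$ valid for \emph{every} group basis $H$, derives an explicit presentation of $G/\gamma_2(G)^p\gamma_4(G)$ (Lemma~\ref{lemma:presentation}, using the hypotheses $G^p\cap\gamma_2(G)\subseteq\gamma_2(G)^p\gamma_3(G)$ and $|\gamma_2(G)/\gamma_2(G)^p\gamma_3(G)|=p^{\binom{d}{2}}$, verified for $d=2$ in Corollary~\ref{corollary:1}), and then shows that suitably corrected generators of $\pi(H)$ satisfy the same relations — the delicate point being the central generators, handled via the isomorphism $\ZZ(G)\Phi(G)/\Phi(G)\cong(\ZZ(FG)\cap I(G)+I(G)^2)/I(G)^2$. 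None of this, nor any substitute for it, appears in your proposal, so the proof is incomplete exactly where completeness matters.

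A secondary problem: you assert that the nilpotency class of $G$ is an $FG$-invariant ``via the Jennings series.'' This is not a known fact in general (it is a well-known open question), and the Jennings series does not yield it. In the present setting one can recover the class because $\gamma_2(G)$ is elementary abelian and $\gamma_i(V(S))=\gamma_i(\overline G)$ for the small group algebra, but the paper avoids needing this as a standalone invariant: it argues from hypotheses on $G$ alone in each branch. Similarly, recovering the isomorphism type of $\ZZ(G)$ and the extension class of $G$ over $G/\ZZ(G)$, as you propose, is not how the argument goes and would still leave you with genuinely non-isomorphic central extensions to separate — which is precisely what fails for $p=2$.
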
 

Note that the class of $p$-groups $G$ with $|G:\ZZ(G)| = p^3$ is not contained in the union of the classes of $p$-groups for which the modular isomorphism problem is known to have a positive answer. Corresponding examples are discussed in \Cref{sec:indexp3}. 
The key ingredient for the proof of \Cref{theo:maintheorema} is the following result, which might also be of independent interest:

\begin{maintheorem}\label{theo:maintheoremb}                                                                                
Let $p>2$ be a prime integer, let $F$ be the field with $p$ elements, and consider finite $p$-groups~$G$ and $H$ with $F G\cong F H$. Let $d \in \Z_{\geq 0}$ with $|G/\Phi(G)\ZZ(G)|=p^d$. If 
\begin{enumerate}
		\item   $G^p\cap \gamma_2(G)\subseteq \gamma_2(G)^p \gamma_3(G)$ and 
		\item  $|\gamma_2(G)/\gamma_2(G)^p\gamma_3(G)|=p^{{d\choose 2}} ,$
	\end{enumerate}
	then $G/\gamma_2(G)^p \gamma_4(G)\cong  H/\gamma_2(H)^p \gamma_4(H)$.
\end{maintheorem}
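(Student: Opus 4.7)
The plan is to directly identify the class-$3$ quotients $\tilde G := G/\gamma_2(G)^p\gamma_4(G)$ and $\tilde H := H/\gamma_2(H)^p\gamma_4(H)$ by exhibiting a rigid presentation of $\tilde G$ that can be recovered from the modular group algebra $FG$. Before anything else, I would verify that the hypotheses transfer from $G$ to $H$. The standard catalog of invariants of $FG$ determines the isomorphism types of $G^{ab}$ and $\ZZ(G)$, the order of $\gamma_2(G)$, the dimensions of the quotients of the Jennings filtration, and related pieces of data; together these recover the index $p^d = |G/\Phi(G)\ZZ(G)|$. The quotient $\gamma_2(G)/\gamma_2(G)^p\gamma_3(G)$ is the target of the commutator pairing on $G/\Phi(G)\ZZ(G)$, and its size can be read off from the Lie-algebra-like structure on the associated graded of the augmentation ideal, so hypothesis (2) transfers. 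Hypothesis (1) amounts to an injectivity statement between the same sort of determined subquotients, and transfers similarly.

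Second, I would show that under these hypotheses $\tilde G$ admits a rigid presentation. Choose representatives $x_1,\dots,x_d \in G$ of an $F$-basis of $G/\Phi(G)\ZZ(G)$, together with further generators of $\Phi(G)\ZZ(G)$ that span $G/\Phi(G)$. By (2), the $\binom{d}{2}$ commutators $c_{ij} := [x_i,x_j]$ with $i<j$ form an $F$-basis of $\gamma_2(G)/\gamma_2(G)^p\gamma_3(G)$. By (1), every element $g \in G$ whose $p$-th power lies in $\gamma_2(G)$ actually satisfies $g^p \in \gamma_2(G)^p\gamma_3(G)$, which eliminates any hidden contribution of $p$-th powers to $\gamma_2(\tilde G)/\gamma_3(\tilde G)$. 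The resulting presentation of $\tilde G$ is specified by: the abelian quotient $G/\gamma_2(G)$, the abelian $p$-torsion group $\gamma_2(\tilde G)$, the triple commutators $[c_{ij},x_k] \in \gamma_3(\tilde G)$, and a finite list of power data landing in $\gamma_3(\tilde G)$.

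Third, I would use a fixed isomorphism $\phi\colon FG \to FH$ to transport this presentation. Since $\phi$ preserves the augmentation ideal $I$ and induces isomorphisms on successive Jennings quotients, one can lift $\phi(x_i-1)+I(H)^2$ to genuine group elements $\tilde x_i \in H$ through the correspondence between $H/\Phi(H)$ and $I(H)/I(H)^2$. A bookkeeping argument, using the determination of $\gamma_2(H)/\gamma_2(H)^p\gamma_3(H)$ as a module under the commutator pairing with $H/\Phi(H)\ZZ(H)$, then shows that the defining relations of $\tilde G$ among $x_1,\dots,x_d$ are satisfied by $\tilde x_1,\dots,\tilde x_d$ in $\tilde H$, and hence extend to the desired isomorphism $\tilde G \cong \tilde H$.

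The main obstacle I expect is the third step: controlling the action of $\phi$ on the third quotient of the augmentation filtration, which is where the class-$3$ commutators $[c_{ij},x_k]$ live. Conditions (1) and (2) are precisely what makes this matching feasible, since they force $\gamma_2(\tilde G)/\gamma_3(\tilde G)$ to coincide with the image of the commutator pairing on $\tilde G/\gamma_2(\tilde G)\ZZ(\tilde G)$, with no admixture from $p$-th powers. With this rigidity in place, iterated commutators in $\tilde H$ can be matched by direct computations on the commutator pairing of the group algebra, completing the proof.
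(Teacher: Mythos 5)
Your overall architecture --- transfer the hypotheses, extract a rigid presentation of $G/\gamma_2(G)^p\gamma_4(G)$, then realize the same relations inside $H$ --- matches the paper's (its presentation lemma is essentially your second step, and the transfer of hypotheses works because conditions (1) and (2) only depend on $G/\gamma_2(G)^p\gamma_3(G)$, which is determined by Sandling's class-$2$ theorem, while $p^d$ is recovered from $\ZZ(FG)=F\ZZ(G)+\ZZ(FG)\cap K(FG)$). The genuine gap is in your third step, and it is exactly the obstacle you flag without resolving. Lifting $\phi(x_i-1)+I(H)^2$ through $H/\Phi(H)\cong I(H)/I(H)^2$ only pins down $\tilde x_i$ modulo $\Phi(H)$. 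The relations you must verify live deeper in the filtration: the commutators $[x_i,x_j]$ modulo $\gamma_2(G)^p\gamma_3(G)$, the triple commutators $[[x_i,x_j],x_k]\in\gamma_3$, the exact $p$-power data $x_i^{p^{n_i}}$, and the centrality of the remaining generators. Different lifts of the same coset of $\Phi(H)$ satisfy genuinely different such relations, and no graded or ``Lie-algebra-like'' invariant determines them: as the paper recalls, even the full small group algebra $FG/I(G)I(\gamma_2(G))$ does not in general determine $G/\Phi(\gamma_2(G))$, already for groups of class $3$. So the ``bookkeeping argument'' you defer is precisely the content of the theorem.

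The mechanism the paper uses to close this gap is a semidirect decomposition of the unit group of a quotient of the small group algebra. Setting $S_0=FG/(I(G)I(\gamma_2(G))+(\gamma_4(G)-1))$ and letting $A$ be Sandling's subgroup generated by the elements $1+(g_1-1)^{k_1}\cdots(g_m-1)^{k_m}$, one has $V(S_0)=\pi(G)\rtimes A=\pi(H)\rtimes A$ with the \emph{same} complement $A$ for every group basis $H$ of $FG$. Writing $g_i=h_ie_i$ with $h_i\in\pi(H)$ and $e_i\in A$ (after further factoring out $A\cap \ZZ(V(S_0))$), the error terms are controlled explicitly: $[[g_ix,g_jy],g_lz]=[[g_i,g_j],g_l]$ for $x,y,z\in A$, $(h_ie_i)^p=h_i^p$ since the relevant commutators have exponent $p$, and the generators that must be central are corrected to central elements of $\pi(H)$ via the decomposition of $\ZZ(FG)$ above. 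This makes $\pi(H)$ an epimorphic image of $G/\gamma_2(G)^p\gamma_4(G)$ of the same order. Without some substitute for this common-complement structure, your step 3 does not go through.
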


This statement is a generalization of \cite[Theorem~1.2]{MM20} when $p>2$, as for $d=2$, it is equivalent to the following implication: if $ G/\ZZ(G )$ is $2$-generated, then the isomorphism class of $G/\gamma_2(G)^p\gamma_4(G)$ is determined by $FG$.
\medskip

This paper is organized in the following way: In \Cref{sec:preliminaries}, we introduce the notation used in this paper. In \Cref{sec:smallgroupalgebra}, we derive results on the so-called small group algebra, which will lead to the proof of~\Cref{theo:maintheoremb}. Finally, in \Cref{sec:indexp3}, we prove \Cref{theo:maintheorema}.

\section{Preliminaries and notation}\label{sec:preliminaries}
Let $p$ be an odd prime number and let $G$ be a finite $p$-group. We use the standard group-theoretic notation. In particular, $Z(G)$ and $\Phi(G)$ denote the center and the Frattini subgroup of $G$, respectively. A \emph{Burnside basis} is a generating system of $G$ of minimal size. 
For $a,b \in G$, let $[a,b] = a^{-1} b^{-1} a b$. Moreover, we set $\gamma_1(G) = G$ and $\gamma_{i+1}(G) = [\gamma_i(G), G]$ for $i\geq 1$. 

\begin{lemma}\label{lemma:computation}
	Let $G$ be a finite $p$-group of nilpotency class at most $3$. Then for each $x,y\in G$, 
	$$[x^p,y] = [x,y]^p [[x,y] ,x]^{{p\choose 2}} \qand [x,y^p]=[x,y]^p [[x,y],y]^{{p\choose 2}}.$$
\end{lemma}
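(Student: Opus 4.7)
\medskip

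\noindent\textbf{Proof proposal.} The plan is to establish the first identity by a direct commutator calculation, using the hypothesis that $\gamma_4(G) = 1$ (so $\gamma_3(G)$ is central), and then observe that the second identity follows by an entirely symmetric argument.

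First, I would iterate the standard identity $[ab,c] = [a,c]^b[b,c]$ to obtain, by induction on $n$,
\[
[x^n, y] \;=\; \prod_{i=0}^{n-1}[x,y]^{x^{i}}.
\]
Next, since $G$ has nilpotency class at most $3$, the element $[[x,y],x] \in \gamma_3(G)$ is central in $G$. From $[x,y]^{x} = [x,y]\,[[x,y],x]$ and centrality of $[[x,y],x]$, a short induction on $i$ gives
\[
[x,y]^{x^{i}} \;=\; [x,y]\,[[x,y],x]^{i}.
\]
Substituting this into the product and using that $[[x,y],x]$ is central (hence commutes with every $[x,y]$ factor), all the $[[x,y],x]$-terms can be collected at the end, yielding
\[
[x^n,y] \;=\; [x,y]^{n}\,[[x,y],x]^{\,0+1+\cdots+(n-1)} \;=\; [x,y]^{n}\,[[x,y],x]^{\binom{n}{2}}.
\]
Specializing $n=p$ gives the first asserted identity.

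For the second identity, the same scheme applies after swapping the roles: iterating $[a,bc] = [a,c]\,[a,b]^{c}$ yields $[x,y^{n}] = \prod_{i=0}^{n-1}[x,y]^{y^{i}}$, and then $[[x,y],y] \in \gamma_3(G)$ is again central, so $[x,y]^{y^{i}} = [x,y]\,[[x,y],y]^{i}$, and collecting exponents produces $[x,y]^{n}[[x,y],y]^{\binom{n}{2}}$. Setting $n=p$ closes the argument.

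There is no real obstacle here; the whole content is that nilpotency class $\leq 3$ makes the relevant basic commutators central, so the various conjugates of $[x,y]$ can be freely commuted past one another. The only point requiring some care is the bookkeeping in the induction that yields $[x,y]^{x^{i}} = [x,y][[x,y],x]^{i}$ and the analogous identity with $y$, but this is routine once centrality of $\gamma_3(G)$ is in hand.
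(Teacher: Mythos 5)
Your proof is correct and follows essentially the same route as the paper: iterate the commutator identity to write $[x^p,y]$ as a product of conjugates $[x,y]^{x^i}$, expand each as $[x,y][[x,y],x]^i$ using that $\gamma_3(G)$ is central, and collect exponents to get $\binom{p}{2}$. The symmetric argument for $[x,y^p]$ is likewise how the paper handles the second identity.
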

\begin{proof}
	Using several times the identity $$[xz,y]=[x,y]^z [z,y]=[x,y][[x,y],z] [z,y], \qquad ( x,y,z\in G) $$   we derive that	\begin{align*}
	[x^p,y]&=  [x,y]^{x^{p-1}}\cdot [x,y]^{x^{p-2}}\cdots [x,y] \\
	&=[x,y] [[x,y],x]^{p-1}\cdot [x,y]^{x^{p-2}}[[x,y],x]^{p-2}\cdots [x,y] \\
%	&= [x,y]^p [[x,y],x]^{\sum_{i=1}^{p-1} i} \\
	&= [x,y]^p [[x,y],x]^{ {p \choose 2}}.
	\end{align*}   
	A similar argument shows the second identity. 
\end{proof}
As an immediate consequence, we obtain:
\begin{lemma}\label{lemma:AgemoCentral}
	Let $G$ be a finite $p$-group with $p>2$. If $\gamma_2(G)^p\gamma_4(G)=1$, then $G^p\subseteq \ZZ(G)$.
\end{lemma}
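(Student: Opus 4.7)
The hypothesis $\gamma_2(G)^p\gamma_4(G)=1$ forces both $\gamma_2(G)^p=1$ and $\gamma_4(G)=1$. In particular $G$ has nilpotency class at most $3$, which puts us exactly in the situation of \Cref{lemma:computation}. My plan is to exploit that commutator formula directly: to show $x^p \in \ZZ(G)$ it is enough to show $[x^p,y]=1$ for every $y \in G$, and \Cref{lemma:computation} rewrites this commutator as a product of two explicit factors living in $\gamma_2(G)$.

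More precisely, I would fix $x,y\in G$ and invoke \Cref{lemma:computation} to write
\[
[x^p,y] \;=\; [x,y]^p\,[[x,y],x]^{\binom{p}{2}}.
\]
The first factor is a $p$-th power of an element of $\gamma_2(G)$ and hence lies in $\gamma_2(G)^p = 1$. For the second factor, the point is that $[[x,y],x]\in \gamma_3(G)\subseteq \gamma_2(G)$, so $[[x,y],x]^p \in \gamma_2(G)^p = 1$ as well. The only remaining ingredient is the arithmetic remark that, since $p$ is odd, $\binom{p}{2}=p\cdot\tfrac{p-1}{2}$ is a multiple of $p$, and therefore $[[x,y],x]^{\binom{p}{2}}=1$.

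Combining these two vanishings gives $[x^p,y]=1$ for all $y\in G$, i.e.\ $x^p\in \ZZ(G)$. Since $G^p$ is generated by the $p$-th powers of elements of $G$, this yields $G^p \subseteq \ZZ(G)$, as required. I do not expect any obstacle beyond making sure to use $p>2$ at the step $p\mid\binom{p}{2}$; for $p=2$ the argument genuinely breaks, which is consistent with the contrast between the two cases highlighted in the introduction.
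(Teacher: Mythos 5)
Your argument is correct and is exactly the intended one: the paper states this lemma as an immediate consequence of \Cref{lemma:computation}, and your derivation (both factors $[x,y]^p$ and $[[x,y],x]^{\binom{p}{2}}$ die in $\gamma_2(G)^p=1$, using $p\mid\binom{p}{2}$ for odd $p$) is precisely how that consequence is obtained. No issues.
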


 Let $F$ be a field of characteristic $p$. In this paper we only consider unitary, finite-dimensional $F$-algebras.
 An \emph{augmented} $F$-algebra is an $F$-algebra $A$ endowed with an $F$-algebra homomorphism  $A\to F$, called the \emph{augmentation map}. The group algebra $FG$ is an augmented $F$-algebra with augmentation map $\sum_{g\in G} a_gg\mapsto  \sum_{g\in G} a_g$. The kernel $I(A)$ of the augmentation map is called the \emph{augmentation ideal} of~$A$. For $A=FG$, we abbreviate $I(FG)$ by $I(G)$.

 Let $A$ be an augmented local $F$-algebra, as it is the case for $A=FG$. Then we denote $V(A)= 1+I(A)$, the subgroup of the group of units of $A$ formed by the elements of augmentation $1$.  Every quotient of $A$ by some ideal $J$ of $A$ is also an augmented algebra with an augmentation map induced by the augmentation of~$A$. We can (and will)  make the identifications $I(A/J)=I(A)/J$ and $V(A/J)=1+I(A/J)=V(A)/(1+J)$.
We define the set $K(A)$ to be the $F$-subspace of $A$ generated by the  elements of the form $ab-ba$ with  $a,b\in A$.

%	\ChDiego{[What do you think of the previous paragraph? If we keep it (improvements are welcome), we should delete from here]}\ChSofia{I like it}
	
%Let $F$ be a field of characteristic $p > 2$. For $a,b \in FG$, we define the commutator $[a,b] = ab - ba$ and set $K(FG)$ to be the $F$-subspace generated by the commutators of the elements in $FG$. Usually, it will become clear from the context whether the group-theoretic or the ring-theoretic notion of commutators is meant. We consider the \emph{augmentation map} 
%\[FG \to F,\, \sum_{g \in G} a_g g \mapsto \sum_{g \in G} a_g,\]
%which is a ring homomorphism. Its kernel, the \emph{augmentation ideal} of $G$, is denoted by $I(G)$. Let $V(FG)$ denote the \emph{group of normalized units} of $FG$, that is, $V(FG) = 1 + I(G)$. \ChDiego{ Maybe we should set the  notation for augmented algebras in general, then then just use the $V(\cdot)$ operator? For example: Given an augmented local algebra $A$, we denote $I(A)$ its augmentation ideal and $V(A)\coloneqq 1+ I(A)$ its group of units of augmentation $1$. When $A=FG$, we abbreviate $I(FG)=I(G)$. Every quotient of $A$ by some ideal $J$ of $A$ is also an augmented algebra, and we can (and will)  make the identifications $I(A/J)=I(A)/J$ and $V(A/J)=1+I(A/J)=V(A)/(1+J) $. }
%\ChSofia{we can do that, but we should make clear that for the group algebra, we always want the particular augmentation map given above (I guess that's a priori not clear)}
%
%\ChDiego{[Until here]}

Note that $I(G) I(\gamma_2(G))$ is a $2$-sided ideal of $FG$. A main tool in this paper will be the investigation of the \emph{small group algebra} $S = FG/I(G) I(\gamma_2(G))$. This object has been extensively studied by R.~Sandling and M.~A.~M.~Salim (see \cite{San89, SalimSandling1995, SalimTesis}).  It was used in~\cite{SalimSandlingp5} to positively answer the modular isomorphism problem for groups of order dividing $p^5$. Moreover, the analogous construction for   group rings over the integers was used in  A.~Whitcomb's thesis \cite{Whitcomb} to positively answer the isomorphism problem  for integral group rings for metabelian groups.   Further results about the small group algebra in a wider sense can be found in   \cite{HertweckSoriano07}.  
%We write $V(S)=1+ I(S)=1+ I(G)/I(G)I(\gamma_2(S))$ for its group of units of augmentation $1$. 
By a bar, we denote both the natural projection $\overline\cdot :FG\to S$, which is an $F$-algebra homomorphism, and its restriction $\overline \cdot :V(FG)\to V(S)$, which can be viewed as a group homomorphism. A subgroup of $V(FG)$ that is an $F$-basis for $FG$ is called a \emph{group basis} of $FG$. 
We say a property $P$ is \emph{determined} by the group algebra if for any pair of $p$-groups $G$ and $H$ such that $FG\cong FH$, the group $G$ satisfies $P$ if and only if $H$ satisfies $P$.
%\ChSofia{List of possible things to define/recall: $I(G)$, $\gamma_i$, augmentation, (Frattini subgroup), $V(FG)$. Maybe we should introduce a short notation for the nilpotency class. \\}
%\ChDiego{[Is this lemma in Huppert's book?]\ChSofia{ I didn't find it. But it is the kind of exercise result you can accidentally restate without being accused of copying, I guess. }}\ChDiego{Ok.}

We have that $(1+I(G)I(\gamma_2(G)))\cap G=\Phi(\gamma_2(G))$   (see the introduction of \cite{SalimSandling1995}), so $\overline{G} \cong G/\Phi(\gamma_2(G))$. Note that this holds for any group basis of $FG$. However, it has been known for a long time that the information encoded by the small group algebra alone is not enough to recover the isomorphism type of $G/\Phi(\gamma_2(G))$ (see \cite[Section 4]{Bag99}), even for groups of nilpotency class $3$ (see \cite[Example 3.11]{MS22} and the subsequent discussion).

We frequently use the following result due to Sandling:  

\begin{lemma}[{\cite{San89}, see also \cite[Proposition~3.2]{MM20}}]\label{lemma:definitiona}
	%	\ChSofia{it looks quite ugly with the $\pi(G)$, nicer ways to write it down?} \ChDiego{[In can't think a better notation. I propose substitute $\pi'$ and $\pi''$ for $\pi_1$ and $\pi_2$, but this does not solve the ugliness of $\pi(G)$ in this proposition. ]}  \ChSofia{I think with the $\bar{G}$, it now looks bearable} \ChDiego{[I'm okay with the notation. And definitively looks better.]}
	Let $G$ be a finite $p$-group. 
	Let $g_1, \ldots, g_m \in \overline{G}$ be elements whose images in $\overline{G}/\gamma_2(\overline{G})$ form an independent generating set as an abelian group. Let $A$ be the subgroup of $V(S)$ generated by the elements $1 + (g_1-1)^{k_1} \cdots (g_m-1)^{k_m}$ with $k_1, \ldots, k_m \in \Z_{\geq 0}$ and $k_1 + \dots + k_m \geq 2$. Then $V(S) = \overline{G}A$. For $a = 1 + (g_1-1)^{k_1} \cdots (g_m-1)^{k_m} \in A$ and $g \in \overline{G}$, we have \[[g,a] = [g, g_1, g_1, \dots, g_1, g_2, \dots, g_2, \dots, g_m, \dots, g_m],\] where each $g_i$ appears exactly $k_i$ times. If additionally $\gamma_4(G) = 1$ holds, then $A$ is abelian and $V(S) = \overline{G} \rtimes A$. 
\end{lemma}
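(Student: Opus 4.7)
The proof rests on a single guiding relation: in $S$, $(\bar g - 1)(\bar h - 1) = 0$ for any $g \in G$ and $h \in \gamma_2(G)$, by the very definition of the ideal $I(G)I(\gamma_2(G))$. The immediate consequence $(g_i - 1)\cdot([g_i, g] - 1) = 0$ drives both the decomposition $V(S) = \bar G A$ and the explicit commutator formula, while the $\gamma_4(G) = 1$ assumption provides the additional vanishings needed for the semidirect-product conclusion.

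For the decomposition $V(S) = \bar G A$, I would argue by iterated reduction along the augmentation-ideal filtration $V(S) = 1 + I(S) \supseteq 1 + I(S)^2 \supseteq \cdots$. Given $v = 1 + \alpha \in V(S)$, the image of $\alpha$ in $I(S)/I(S)^2$ is a linear combination $\sum a_i(g_i - 1)$ modulo elements $\bar h - 1$ with $h \in \gamma_2(G)$; the latter in fact lie in $I(S)^2$ via $[g, h] - 1 \equiv (g-1)(h-1) - (h-1)(g-1) \pmod{I(G)^3}$, so one can peel off a factor $\bar g_0 \in \bar G$ so that $\bar g_0^{-1} v \in 1 + I(S)^2$. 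Iterating, using that $I(S)^n/I(S)^{n+1}$ is spanned by monomials $\prod (g_i - 1)^{k_i}$ with $\sum k_i = n$ (together with lower-weight group elements that can be similarly stripped off), the process terminates because $FG$ is finite-dimensional and yields $v = \bar g \cdot a$ with $a \in A$.

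The commutator formula rests on the local computation $g^{-1}(g_i - 1)g = (g_i - 1) + g_i c_i$ with $c_i := [g_i, g] - 1 \in I(\gamma_2(G))$, which in $S$ simplifies to $(g_i - 1) + c_i$ because $(g_i - 1) c_i \in I(G)I(\gamma_2(G)) = 0$. Expanding $g^{-1} u g = \prod ((g_i - 1) + c_i)^{k_i}$ via the binomial formula, any term carrying two or more $c$-factors lies in $I(\gamma_2(G))^2 \subseteq I(G)I(\gamma_2(G)) = 0$ and vanishes, so only terms with exactly one $c_j$ survive. Substituting into $[g, 1+u] = (1 + g^{-1} u g)^{-1}(1 + u) = 1 + (1+u)^{-1}(g^{-1}u g - u)$ and rearranging each $c_j$ to a canonical position---which requires tracking the non-commutation of $c_j$ with subsequent $(g_\ell - 1)$ factors via iterated applications of the defining relation together with $c_j(g_\ell - 1) \equiv (g_\ell - 1) c_j \pmod{I(\gamma_3(G))}$ in $S$---the surviving contributions aggregate into the left-normed iterated commutator $[g, g_1, \ldots, g_1, g_2, \ldots, g_m]$ with the prescribed multiplicities.

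Under $\gamma_4(G) = 1$, the commutator formula immediately shows that $[a, a']$ for $a, a' \in A$ is an iterated commutator of length at least $4$ in $G$ and therefore trivial, so $A$ is abelian; likewise $[\bar g, a] \in A$, so $A$ is normal in $V(S)$. The semidirect-product decomposition then requires $\bar G \cap A = 1$, which can be verified by projecting to $S/I(S)^3$. The main obstacle of the proof is the commutator calculation: $S$ is not graded, so the bookkeeping of how each $c_j$ commutes past the $(g_\ell - 1)$ factors, and the reassembly of sums of monomial expressions into a single left-normed group commutator, is where the core work lies.
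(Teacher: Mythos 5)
First, a point of reference: the paper offers no proof of this lemma at all --- it is imported from Sandling \cite{San89} (see also \cite[Proposition~3.2]{MM20}) --- so your attempt can only be measured against the standard argument. Your overall architecture matches it: a filtration argument along the powers of $I(S)$ for $V(S)=\overline{G}A$, and an expansion of $g^{-1}ug$ for the commutator formula. But the central computation has a genuine gap. Since $I(G)I(\gamma_2(G))=0$ in $S$, in the expansion of $g^{-1}ug=\prod\bigl((g_i-1)+c_i\bigr)^{k_i}$ \emph{every} term in which a $c$-factor is preceded by any other factor is killed by the $I(G)$-element to its left; the survivors are therefore not ``all terms with exactly one $c_j$'' but the single term whose \emph{leading} factor is a $c$, namely $([g_{i_0}]\!,g]-1)(g_{i_0}-1)^{k_{i_0}-1}\cdots(g_m-1)^{k_m}$ with $i_0$ the first index with $k_{i_0}\geq 1$. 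Consequently the ``rearrangement of each $c_j$ to a canonical position'' you describe is not the computation that produces the left-normed commutator, and the congruence $c_j(g_\ell-1)\equiv(g_\ell-1)c_j \pmod{I(\gamma_3(G))}$ points the wrong way: in $S$ the term $(g_\ell-1)c_j$ is zero, and the $\gamma_3$-correction \emph{is} the whole answer. The identity you actually need is that $(h-1)(x-1)=[h,x]-1$ holds in $S$ for $h\in\gamma_2(G)$, $x\in G$ (because $(x-1)(h-1)=0$ and $hx-xh=xh([h,x]-1)\equiv[h,x]-1$); applying it repeatedly absorbs the trailing $(g_i-1)$'s into the leading commutator and turns the surviving term into $c-1$ with $c=[g_{i_0},g,g_{i_0},\dots,g_m]$, whence $[g,a]=1-(c-1)=c^{-1}=[g,g_{i_0},\dots,g_m]$ using $(c-1)^2=0$ in $S$. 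Note also that your identity $[g,1+u]=1+(1+u)^{-1}(g^{-1}ug-u)$ is incorrect as written (it should be $1+(1+g^{-1}ug)^{-1}(u-g^{-1}ug)$), and here the discrepancy is not cosmetic: it yields the inverse of the claimed commutator.

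The two final assertions are also not supported by what precedes them. The commutator formula computes $[g,a]$ only for $g\in\overline{G}$, so it does not ``immediately'' give $[a,a']=1$ for $a,a'\in A$ when $\gamma_4(G)=1$; one needs the separate (but similar) verification that $uu'-u'u=0$ in $S$, where the reordering corrections land in $\overline{I(\gamma_4(G))}=0$ or are annihilated by a preceding $I(G)$-factor. And $\overline{G}\cap A=1$ cannot be ``verified by projecting to $S/I(S)^3$'': that projection is blind to everything in $1+I(S)^3$, so it shows at best that $\overline{G}\cap A\subseteq 1+I(S)^3$. Establishing triviality of the intersection requires a genuine additional argument, e.g.\ the order count comparing $|\overline{G}|\cdot|A|$ with $|V(S)|=p^{\dim_FS-1}$ that underlies Sandling's proof.
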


Furthermore, \cite[Theorem~1.1]{SalimSandling1995} yields that 
\begin{equation}\label{eq:gammaivs}
\gamma_i(V(S))=\gamma_i(\overline{G})
\end{equation} 
for each $i\geq 2$. 

\section{Results on the small group algebra}\label{sec:smallgroupalgebra}
%\ChSofia{potentially move background info on the small group algebra to the previous section?}
%\ChSofia{prerequisites for the results?} \ChDiego{[Ok. And then the title of \Cref{sec:smallgroupalgebra} becomes Proof ot Theorem B?]}\ChSofia{can also be something like Results on the small group algebra - whatever you like}
Throughout, let $F$ denote the field with $p$ elements for a prime number $p> 2$ and let $G$ be a finite $p$-group. In this section, we derive results on the small group algebra that will later be used to solve the modular isomorphism problem for groups with a center of index~$p^3$. 

We use the notation introduced in the preceding section. In particular, let $\overline{\cdot} \colon FG\to S$ be the natural projection onto the small group algebra $S$ and let $V(S) = 1 + I(G)/I(G) I(\gamma_2(G))$ denote the group of units of augmentation $1$ of $S$. Furthermore, let $A$ be a subgroup of $V(S)$ as described in Lemma~\ref{lemma:definitiona}. 

%By a result of Sandling \cite{San89}, as explained in \cite[Proposition~2.4]{MM20}, there exists a subgroup $A$ of $S$ such that $S= \pi(G) A$. 

\begin{remark}
For every normal subgroup $N$ of $G$ contained in $\gamma_2(G)$, it follows easily  from the identity $xy-1=x-1+y-1+(x-1)(y-1)$  that \[I(N)FG+ I(\gamma_2(G))I(G)= N-1+I(\gamma_2(G))I(G).\] 
\end{remark}

In particular, $\gamma_i(\overline G)-1 = \overline{\gamma_i(G)-1}$ is an ideal in $S$.
In the following, we set $\Gamma = \gamma_4(G) -1 \subseteq FG$ and consider the quotient algebra  $S_0 = FG/(I(G) I(\gamma_2(G)) + \Gamma)  \cong S/\overline{\Gamma}$ together with the corresponding projection $\pi_0\colon FG \to S_0$.
Then $\pi_0(G)= G/\gamma_2(G)^p \gamma_4(G)$, and the same holds for every group basis of~$FG$. %We set $\VSd=1+\pi'(I(G))$. 
Furthermore, we obtain $\VSd = \pi_0(G) \rtimes A$ (see \Cref{lemma:definitiona}). %for the subgroup $A$ defined in \Cref{lemma:definitiona}. 
More generally, the following holds:
%Then the previous yield $S'=\pi'(G) A$ for the subgroup $A$ defined in \Cref{lemma:definitiona}. Moreover, \cite[Proposition 2.4\,(iv)]{MM20} guarantees that $S'=\pi'(G)\rtimes A$. 
%\begin{lemma}\label{lemma:orders}
%	Let $G$ and $H$ finite $p$-groups such that $FG\cong FH$. Then $|G/\Phi(\gamma_2(G))|=|H/\Phi(\gamma_2(H))|$ and $|G/\gamma_2(G)^p\gamma_4(G)|=|H/\gamma_2(H)^p\gamma_4(H)|$.
%\end{lemma}
%\begin{proof}
%	This is well-known. The first equality follows as both $|\gamma_2(G)|$ and $|\gamma_2(G) /\Phi(\gamma_2(G))|$ are determined by the group algebra. The second one follows follows from the previous and the identity $\gamma_4(\pi(G))=\gamma_4(S)=\gamma_4(\pi(H))$.
%\end{proof}

 \begin{lemma}\cite[Proposition 3.1]{MS22}\label{lemma:complement}
 	Let $H$ be any group basis of $FG$. Then $\VSd=\pi_0(H)\rtimes A$.
 \end{lemma}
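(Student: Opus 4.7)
The plan is to combine \Cref{lemma:definitiona} applied to $H$ with a counting argument, reducing the proof to showing that $\pi_0(H) \cap A = 1$. First, since $FH = FG$, the two-sided ideals $I(H)I(\gamma_2(H))$ and $(\gamma_4(H)-1)FH$ coincide with $I(G)I(\gamma_2(G))$ and $\Gamma$ as ideals of $FG$, so the algebra $S_0$ is intrinsic to $FG$, and by the remark preceding the lemma, $\pi_0(H) \cong H/\gamma_2(H)^p\gamma_4(H)$ is an $F$-basis of $S_0$. In particular, $|\pi_0(H)| = \dim_F S_0 = |\pi_0(G)|$. Applying \Cref{lemma:definitiona} to $H$ with a Burnside basis $h_1,\ldots,h_m$ of $\overline{H}$ yields an abelian subgroup $A_H \subseteq V(S_0)$ satisfying $V(S_0) = \pi_0(H) \rtimes A_H$; in particular, $\pi_0(H)$ is normal in $V(S_0)$ and $|A_H| = |V(S_0)|/|\pi_0(H)| = |A|$.

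Given this, it suffices to show $\pi_0(H) \cap A = 1$: combined with the order equality $|\pi_0(H)| \cdot |A| = |V(S_0)|$ and the normality of $\pi_0(H)$, this yields $V(S_0) = \pi_0(H) \rtimes A$ as desired. The plan for proving the triviality of $\pi_0(H) \cap A$ is to establish that $A$ equals $A_H$ as subgroups of $V(S_0)$. This is equivalent to verifying that the subgroup of $V(S_0)$ produced by \Cref{lemma:definitiona} is an invariant of $S_0$ alone, independent of the particular Burnside basis of $\overline{G}$ used in its construction. One proceeds by expressing any two Burnside bases as $\GL_m(\F_p)$-equivalent up to correction factors in $\Phi(\overline{G}) = \gamma_2(\overline{G})\overline{G}^p$, and expanding the generating products using the identity $(xy-1) = (x-1) + (y-1) + (x-1)(y-1)$; commutator corrections are absorbed via the relation $I(G)I(\gamma_2(G)) = 0$ in $S$, while $p$-th power corrections are handled using $\Gamma = 0$ in $S_0$ together with the commutator formula of \Cref{lemma:definitiona}.

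The main obstacle is precisely this basis-invariance of $A$. Crucially, the Burnside basis of $\overline{H}$ used to define $A_H$ need not be a Burnside basis of $\overline{G}$ (indeed $\overline{G}$ and $\overline{H}$ can be genuinely different subgroups of $V(S)$), so the argument cannot be reduced to a mere choice of basis and must be carried out abstractly at the level of $V(S_0)$. The commutator contributions from $\gamma_2(\overline{G})$ are straightforward to handle thanks to the defining relation of $S$, but the $p$-th power contributions from $\overline{G}^p$ require a more delicate computation exploiting the vanishing of $\Gamma$ in $S_0$ and the additional commutator structure provided by \Cref{lemma:definitiona}.
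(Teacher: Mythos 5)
Your reduction is fine as far as it goes: the ideals $I(G)I(\gamma_2(G))$ and $(\gamma_4(G)-1)FG$ are canonical, so $S_0$ and $|\pi_0(H)|=|\pi_0(G)|$ are determined by $FG$; applying \Cref{lemma:definitiona} to $H$ gives $V(S_0)=\pi_0(H)\rtimes A_H$ with $|A_H|=|A|$; and since $\pi_0(H)$ is normal in $V(S_0)$, the statement does reduce to $\pi_0(H)\cap A=1$. (Note that the paper itself offers no proof here --- the lemma is quoted from \cite{MS22} --- so the comparison can only be with your argument on its own terms.)

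The gap is in your plan for the remaining step. You propose to prove $\pi_0(H)\cap A=1$ by showing $A=A_H$, deduced from the claim that the subgroup produced by \Cref{lemma:definitiona} is an invariant of $S_0$, independent of the chosen Burnside basis. That claim is false, already for two Burnside bases of the \emph{same} group. Let $G$ be extraspecial of order $p^3$ and exponent $p$ with Burnside basis $g_1,g_2$ and $c=[g_2,g_1]$; here $\Phi(\gamma_2(G))=1$ and $\gamma_4(G)=1$, so $\overline{G}=G$ and $S_0=S$. In $S$ one has $(g_2-1)(g_1-1)=(g_1-1)(g_2-1)+(c-1)$ and $(c-1)(g_1-1)(g_2-1)\in I(G)I(\gamma_2(G))$, whence $1+(g_2-1)(g_1-1)=c\cdot\bigl(1+(g_1-1)(g_2-1)\bigr)$. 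The left-hand side is a generator of the subgroup $A'$ built from the reordered basis $(g_2,g_1)$; if it lay in $A$, then $c\in A\cap\overline{G}=1$, a contradiction. So $A'\neq A$: the complement depends even on the \emph{ordering} of the basis, and a fortiori $A=A_H$ cannot be expected for a genuinely different group basis $H$. You yourself flag the passage from $\overline{G}$ to $\overline{H}$ as ``the main obstacle'' and leave it as a plan rather than an argument, so the crucial step is both unproved and routed through a false intermediate claim. A correct argument (as in \cite{MS22}) must show directly that the \emph{fixed} subgroup $A$ still complements $\pi_0(H)$ --- for instance by proving $V(S_0)=\pi_0(H)\cdot A$ and counting --- rather than by identifying $A$ with $A_H$.
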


Finally, we consider the natural projection from $\VSd$ onto $\VSdd= \VSd/ (A\cap \ZZ(\VSd))$. Composing it with the restriction of~$\pi_0$ to $V(FG)$ yields a group homomorphism 
\begin{equation}\label{eq:defpi}
\pi \colon V(FG)\to \VSdd.
\end{equation} Hence $\VSdd =\pi(G)\rtimes E$ follows, where $E=A/ (A\cap \ZZ(\VSd))$ is elementary abelian due to $A^p\subseteq \ZZ(\VSd)$ (see \Cref{lemma:definitiona}). Therefore, by \Cref{lemma:complement}, we have 
\begin{equation}\label{eq:decompv}
	V = \pi(H) \rtimes E
\end{equation}
  for any group basis $H$ of $FG$. 
%  \ChSofia{I think the decomposition of $V$ is a bit hidden here, we use it in the proof of Theorem 4.4. Maybe phrase as a lemma/remark, so that we can put a reference?}\ChDiego{Maybe just a numbered equation?} \ChSofia{also fine for me -- I think it makes people happy to click on a hyperlink if they forgot the definition of symbols :D}
%This means that $\pi''(1+\ZZ(FG)\cap I(G))\subseteq \ZZ(\pi''(G))$, and by the same reason $\pi''(1+\ZZ(FG)\cap I(G))\subseteq \ZZ(\pi''(H))$. 

%\ChSofia{maybe motivate the next lemma - otherwise it might frighten people}\ChDiego{[Ok, but how? On the other hand, I have nothing against scaring people a bit.]} \ChSofia{Okayyyy... I made an attempt with the following sentence.}
In the next lemma, we derive a presentation for groups satisfying the hypothesis of Theorem~\ref{theo:maintheoremb}. 

 \begin{lemma}\label{lemma:presentation}  
	Write $|G/\ZZ(G)\Phi(G)|=p^d$, $|\gamma_2(G)^p \gamma_3(G)/\gamma_2(G)^p\gamma_4(G)|=p^m$ and $|G/\Phi(G)|=p^k$. If \begin{enumerate}[(a)]
		\item  $G^p\cap \gamma_2(G)\subseteq \gamma_2(G)^p \gamma_3(G)$ and 
		\item $|\gamma_2(G)/\gamma_2(G)^p\gamma_3(G)|=p^{{d\choose 2}} $,
	\end{enumerate}  then there exist positive integers $n_i$  and  $0\leq \alpha_{is}, \beta_{ijls}\leq p-1$ for   $i,j,l\in \{1,\dots, k\}$ and $s\in \{1,\dots, m\}$ such that %\ChDiego{[I modified the notation for $\mathcal R$: now it is uglier but more precise.]} 
$$G/\gamma_2(G)^p \gamma_4(G)\cong \GEN{g_1,\dots, g_k ,c_1,\dots, c_m \mid \mathcal R_{ (n_i,\alpha_{ij},\beta_{ijls})} ( g_1,\dots, g_k,c_1,\dots, c_m)   },$$   where $\mathcal R_{( n_i,\alpha_{ij},\beta_{ijls})} ( g_1,\dots, g_k,c_1,\dots, c_m) $ is the set formed by the following relations: 
\begin{enumerate}
	\item \label{1} $g_i^{p^{n_i}}=\prod_{1\leq j \leq m}c_j^{\alpha_{ij}}$   for each   $1\leq i  \leq k$.
	\item \label{2}$ [[g_i,g_j], g_l]=\prod_{1\leq s\leq m} c_s^{\beta_{ijls}}$ for $1\leq i,j,l\leq d$.
	\item\label{3} $[g_i,g_j]^p=1$ for  $1\leq i ,j\leq d$.
	\item\label{4} $g_i$ is central for $d<i\leq k$. 
	%\ChSofia{maybe we should write (4) and (5) as proper relations, this is a bit sloppy} \ChDiego{[If you prefer so, go ahead. My impresion is that this way was friendlier to the reader.]} \ChSofia{Maybe we leave it that way. Can we write a full sentence? ``The nilpotency class of $G$ is at most 3. Would feel less sloppy to me for some irrational reason.} \ChDiego{[Done as you propose, except it is not ``of $G$''; maybe we can write ``of the group'' instead?]} \ChSofia{Done}
	\item\label{5} The nilpotency class of the group is at most $3$.
\end{enumerate} 
Moreover, we can choose the elements $g_{d+1},\dots ,g_{k} \in G/\gamma_2(G)^p \gamma_4(G)$ such that they have central preimages in $G$.
\end{lemma}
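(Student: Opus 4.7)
The plan is to construct explicit elements $g_i$ and $c_s$ realising the presentation, verify that each of the five listed families of relations holds in the target $\tilde G:=G/\gamma_2(G)^p\gamma_4(G)$, and then argue by counting that these relations suffice to recover $\tilde G$.

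For the generators, since $|G/\Phi(G)\ZZ(G)|=p^d$, I would first pick $g_1,\dots,g_d\in G$ whose images form a basis of the elementary abelian group $G/\Phi(G)\ZZ(G)\cong (\Z/p)^d$. The subgroup $\Phi(G)\ZZ(G)/\Phi(G)\cong \ZZ(G)/(\ZZ(G)\cap \Phi(G))$ has order $p^{k-d}$, so I can lift a basis of it to \emph{central} elements $g_{d+1},\dots,g_k\in \ZZ(G)$; this gives a Burnside basis of $G$ and is what delivers the ``moreover'' clause. Because $[\gamma_2(G),\gamma_2(G)]\subseteq \gamma_4(G)$, the group $\gamma_2(G)/\gamma_4(G)$ is abelian, hence $\gamma_2(G)^p\gamma_3(G)/\gamma_2(G)^p\gamma_4(G)$ is elementary abelian of order $p^m$, and I choose $c_1,\dots,c_m$ as a basis of it.

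The five relations are then checked in $\tilde G$ as follows. Relation (5) is immediate from $\gamma_4(G)\subseteq \gamma_2(G)^p\gamma_4(G)$, and (4) from the choice $g_{d+1},\dots,g_k\in \ZZ(G)$. For (3) each $[g_i,g_j]\in\gamma_2(G)$, so $[g_i,g_j]^p\in \gamma_2(G)^p$, which vanishes in $\tilde G$. For (2), $[[g_i,g_j],g_l]\in \gamma_3(G)\subseteq \gamma_2(G)^p\gamma_3(G)$ can be expanded in the basis $c_s$ to read off $\beta_{ijls}$, and restricting to $i,j,l\leq d$ loses nothing since otherwise (4) already makes the triple commutator trivial. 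For (1), let $n_i\geq 1$ be the order of $g_i\gamma_2(G)$ in $G/\gamma_2(G)$ (which is at least $1$ because $g_i\notin \Phi(G)\supseteq \gamma_2(G)$); then $g_i^{p^{n_i}}\in G^p\cap \gamma_2(G)\subseteq \gamma_2(G)^p\gamma_3(G)$ by hypothesis (a), so its image in $\tilde G$ expands uniquely as a product of $c_s$'s, giving $\alpha_{is}$.

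Finally, letting $\tilde G_{\mathrm{pres}}$ denote the group defined by the presentation, the obvious surjection $\tilde G_{\mathrm{pres}}\twoheadrightarrow \tilde G$ sending each symbol to its homonym reduces matters to the inequality $|\tilde G_{\mathrm{pres}}|\leq |\tilde G|$. A direct count from the filtration $1\subseteq \gamma_2(G)^p\gamma_3(G)\subseteq \gamma_2(G)\subseteq G$ modulo $\gamma_2(G)^p\gamma_4(G)$, combined with hypothesis (b), gives $|\tilde G|=p^{n_1+\cdots+n_k}\cdot p^{\binom{d}{2}}\cdot p^m$, provided the initial $g_i$ are further refined so that their images form an invariant-factor basis of the abelian $p$-group $G/\gamma_2(G)$. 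On the other side, relation (5) collects all commutators into length at most $3$, (4) kills those involving $g_{d+1},\dots,g_k$, (3) bounds the order of each remaining $[g_i,g_j]$, and (1)–(2) eliminate $g_i^{p^{n_i}}$ and $[[g_i,g_j],g_l]$ in favour of the central $c_s$-layer, yielding a polycyclic normal form that bounds $|\tilde G_{\mathrm{pres}}|$ by the same product. I expect this last step to be the main obstacle, since one must be careful that the $c_s$'s really do behave as a central elementary abelian layer in $\tilde G_{\mathrm{pres}}$: condition (b) is precisely what forces the $\binom{d}{2}$ commutators $[g_i,g_j]$ with $1\leq i<j\leq d$ to remain independent modulo that layer, and condition (a) is exactly what permits $g_i^{p^{n_i}}$ to be expressed in the layer in the first place.
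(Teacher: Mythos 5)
Your construction of the data is the same as the paper's (generators $g_1,\dots,g_d$ lifting a basis of $G/\ZZ(G)\Phi(G)$, central lifts $g_{d+1},\dots,g_k$, the $c_s$ a basis of $\gamma_3(\widetilde{G})$ where $\widetilde{G}=G/\gamma_2(G)^p\gamma_4(G)$, then a surjection from the presented group plus a counting argument), and your verification of the five relation families in $\widetilde{G}$ is fine. The gap is exactly in the step you flag as "the main obstacle": the bound $|\widetilde{G}_{\mathrm{pres}}|\leq \prod_i p^{n_i}\cdot p^{{d\choose 2}}\cdot p^m$. The presentation contains \emph{no} relation saying that the $c_s$ are central, commute with one another, or have order $p$, so there is no a priori polycyclic normal form with a $c$-layer of size $p^m$; taken literally, a presentation of this shape (e.g.\ with all $\beta_{ijls}=0$ but $m>0$) defines an infinite group. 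What makes the count work — and what your sketch never uses — is that $\gamma_3(\widetilde{G})$ is generated by the triple commutators $[[x_i,x_j],x_l]$, so the matrix $(\beta_{ijls})_{(i,j,l),s}$ has rank $m$; inverting the relations (2) then places $c_1,\dots,c_m$ inside $\gamma_3(\widetilde{G}_{\mathrm{pres}})$, which is central by (5). One must then still prove that $\gamma_3(\widetilde{G}_{\mathrm{pres}})$ is elementary abelian (so $|\langle c_1,\dots,c_m\rangle|\leq p^m$) and that $[\widetilde{G}_{\mathrm{pres}},\Phi(\widetilde{G}_{\mathrm{pres}})]\subseteq\gamma_3(\widetilde{G}_{\mathrm{pres}})$ (so that the commutator induces a bilinear map on the $p^d$-bounded quotient $\widetilde{G}_{\mathrm{pres}}/\ZZ(\widetilde{G}_{\mathrm{pres}})\Phi(\widetilde{G}_{\mathrm{pres}})$, giving $|\gamma_2/\gamma_3|\leq p^{{d\choose 2}}$); both steps rest on the identity $[x^p,y]=[x,y]^p[[x,y],x]^{{p\choose 2}}$ of \Cref{lemma:computation}, relation (3), and $p>2$. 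None of this appears in your argument.

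A secondary confusion: hypotheses (a) and (b) are conditions on $G$, not relations of the presentation, so they cannot "force" the $c_s$ to behave as a central elementary abelian layer in $\widetilde{G}_{\mathrm{pres}}$, nor do they make the commutators $[g_i,g_j]$ independent there. They act on the other side of the inequality: (b) gives $|\gamma_2(\widetilde{G})/\gamma_3(\widetilde{G})|=p^{{d\choose 2}}$ exactly, and (a) makes relation (1) well-posed, so that $|\widetilde{G}|$ equals the product $\prod p^{n_i}\cdot p^{{d\choose 2}}\cdot p^m$ against which $|\widetilde{G}_{\mathrm{pres}}|$ is compared. Your remark that the $g_i$ should be chosen independent modulo $\gamma_2(G)$ so that $\prod p^{n_i}=|G/\gamma_2(G)|$ is a legitimate point of care, but it does not repair the missing bound on the presented group.
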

%	\ChSofia{I did not come up with something instead of $\tilde{G}$ so far}\ChDiego{[Ángel suggested --I forgot-- widetildes and overlines over single letters, so they can be more easily distinguished.]} \ChSofia{I changed that -- what do you think?}\ChDiego{[I think it is fine.]}
\begin{proof}
Set $\widetilde{G}=G/\gamma_2(G)^p \gamma_4(G)$ and, for $g \in G$, write $\widetilde{g} = g \gamma_2(G)^p \gamma_4(G) \in \widetilde{G}$ (similarly for subsets of~$G$). Observe that $|\gamma_3(\widetilde{G})|=p^m$ and $|\widetilde{G}/\Phi(\widetilde{G})|=p^k$. Write $ |\widetilde{G}/\ZZ(\widetilde{G}) \Phi(\widetilde{G})|=p^t$. As the image of $Z(G) \Phi(G)$ in $\widetilde{G}$ has index at most~$p^d$, we obtain   
$p^t\leq p^d$.  Moreover, $\gamma_2(\widetilde G)$ is elementary abelian and the hypotheses imply that $ \widetilde {G}^p \cap\gamma_2(\widetilde G)\subseteq \gamma_3(\widetilde G)  $ and $|\gamma_2(\widetilde G)/\gamma_3(\widetilde G)|=p^{d\choose 2}$.  Since ${\widetilde G}^p\subseteq \ZZ(\widetilde{G})$ by \cref{lemma:AgemoCentral}, we have a   well-defined map $\mathfrak c \colon \widetilde{G}/\ZZ(\widetilde{G}) \Phi(\widetilde{G}) \times \widetilde{G}/\ZZ(\widetilde{G}) \Phi(\widetilde{G}) \to  \gamma_2(\widetilde G)/\gamma_3(\widetilde G) $ induced by the group commutator map, which is bilinear and anti-symmetric. If $\{x_1,\dots, x_t\}$ is a basis of $\widetilde{G}/\ZZ(\widetilde{G}) \Phi(\widetilde{G})$, then $\mathfrak c(x_i,x_j)$ with $1\leq i <j \leq t$ generates $\gamma_2(\widetilde G)/\gamma_3(\widetilde G)$ as an $F$-vector space. Thus $p^{{d\choose 2}}=|\gamma_2(\widetilde G)/\gamma_3(\widetilde G)|   \leq p^{{t\choose 2}}$. This yields $t=d$.

Let $\{x_1,\dots, x_d\}$ be a set of preimages of a basis of $  G/\ZZ(  G)\Phi(  G)$ in $G$, and let $\{x_{d+1},\dots x_k$\} be a set of central preimages in $G$ of a basis of $\ZZ(G)\Phi(G)/\Phi(G)$. Thus $\{x_1,\dots, x_k\}$ is a Burnside basis of $G$, and its image is a Burnside basis of $\widetilde G$. By abuse of notation, we regard  $\{x_1,\dots, x_k\}$ as a subset of $\widetilde G$. Let $\{z_1,\dots , z_m\}$ be a basis of $\gamma_3(\widetilde G)$. Write $p^{n_i}$ for the order of $x_i \gamma_2(\widetilde{G})$ in $\widetilde{G}/\gamma_2(\widetilde G)$. As $\widetilde {G}^p\cap \gamma_2(\widetilde G) \subseteq \gamma_3(\widetilde G)$, we have that $x_i^{p^{n_i}}=\prod_{1\leq j \leq m} z_j^{\alpha_{ij}}$ for uniquely determined $\alpha_{ij} \in \{0, \dots, p-1\}$. Moreover, for $1\leq i,j,l\leq d$, we can write $ [[x_i,x_j], x_l]=\prod_{1\leq s\leq m} z_s^{\beta_{ijls}}$ for uniquely determined $\beta_{ijls} \in \{0, \dots, p-1\}$. As $\{z_1, \ldots, z_m\}$ is a basis of $\gamma_3(\widetilde G)$, the matrix $(\beta_{ijls})_{ (i,j,l), s}$,   with rows indexed by triples $(i,j,l)$ and columns indexed by~$s$, has rank $m$. 
	
	Let \[H=\GEN{g_1,\dots, g_k,c_1,\dots ,c_m \mid \mathcal R_{ (n_i,\alpha_{ij},\beta_{ijls})} ( g_1,\dots, g_k,c_1,\dots, c_m) }.\] 	 The assignment $g_i \mapsto x_i$ ($i = 1, \ldots, k$) and $c_j \mapsto z_j$ ($j = 1, \ldots, m$) defines a surjective group homomorphism $H\to \widetilde G$. To complete the proof it suffices to show that it is an isomorphism, or equivalently that $|H|\leq  |\widetilde G|$.

	 Observe that since the matrix $(\beta_{ijls})_{ (i,j,l)\times s}$  has rank $m$, relation \eqref{2} implies that $c_1, \ldots, c_m \in \gamma_3(H)$. 	In particular, we have $c_1, \ldots, c_m \in \Phi(H)$ and hence $|H/\Phi(H)|\leq p^k$.

	  As $H$ has nilpotency class at most $3$, $\gamma_2(H)$ is abelian.   The group $\gamma_2(H)$ is generated by $\gamma_3(H)$ and elements of order~$p$ by relation \eqref{3}, and hence $\gamma_2(H)/\gamma_3(H)$  is elementary abelian. Thus so is $\gamma_2(H)/\gamma_2(H)\cap \ZZ(H)$.   On the other hand, the image of the map $H/\ZZ(H)\times \gamma_2(H)/\gamma_2(H) \cap \ZZ(H)\to \gamma_3(H)$  
	  induced by the commutator map generates $\gamma_3(H)$. For $h \in H$ and $h' \in \gamma_2(H)$, we have  
	$[h \ZZ(H), h' \gamma_2(H)\cap \ZZ(H)]^p= [h\ZZ (H), (h')^p \gamma_2(H)\cap \ZZ(H)]=1$ (see \Cref{lemma:computation}). Thus $\gamma_3(H)$ is elementary abelian.
	This implies that $\gamma_2(H)$ is elementary abelian (as an abelian group generated by elements of order~$p$).  %\ChDiego{[I implemented your proposals in this paragraph and in the following one. Plesase check. ]} \ChSofia{I fixed some typos, otherwise it's fine for me.}
	
 By relations \eqref{2} and \eqref{5}, we have $\gamma_3(H) = \langle c_1, \dots, c_m\rangle$, so
	\begin{equation}\label{eq:gamma3h}
 |\gamma_3(H)|\leq  p^m=|\gamma_3(\widetilde G)|.
	\end{equation} 
 Moreover, using \eqref{4}, we derive  that $|H/\ZZ(H)\Phi(H)|\leq p^d$. 
	Observe that $[H,\Phi(H)] \subseteq \gamma_3(H)$ due to $[x^p,y] \gamma_3(H) =  [x,y]^p \gamma_3(H)$ for all $x,y \in H$ (see Lemma~\ref{lemma:computation}) and $\gamma_2(H)$ being elementary abelian. Then the commutator in the group defines a bilinear map  $H/Z(H) \Phi(H) \times H/Z(H) \Phi(H)\to  \gamma_2(H)/\gamma_3(H)$, $(x\ZZ(H)\Phi(H), y\ZZ(H)\Phi(H))\mapsto [x,y] \gamma_3(H)$.
	As for $\widetilde{G}$, %\ChSofia{maybe "as for $\widetilde{G}$"`?}\ChDiego{[Done]} 
	we derive that
	\begin{equation}\label{eq:quotientgamma2gamma3}
	|\gamma_2(H)/\gamma_3(H)|\leq p^{{d\choose 2}}=|\gamma_2(\widetilde G)/\gamma_3(\widetilde G)|. 
	\end{equation}
	 Finally, $H/\gamma_2(H)=\GEN{g_1 \gamma_2(H),\dots, g_k \gamma_2(H)}$ has order at most $\prod_{i=1 }^k p^{n_i}=|\widetilde G/\gamma_2(\widetilde G)|$ by relation \eqref{1}. Since $ |\widetilde G|= |\widetilde G/\gamma_2(\widetilde G)|\cdot |\gamma_2(\widetilde G)/\gamma_3(\widetilde G) |\cdot  |\gamma_3(\widetilde G)| $, combining this with \eqref{eq:gamma3h} and~\eqref{eq:quotientgamma2gamma3} yields that $|H|\leq |\widetilde G|$. This proves the statement. 
\end{proof}

%\ChAngel{[Make clear when  $p>2$ is used]}\ChDiego{[I think one way or the other it is used everywhere. It would be nice though to have \Cref{corollary:1} also for $p=2$.]} \ChSofia{I thought about marking usage of $p \neq 2$ this morning, but I think it does not make sense (although I do see the point of the objection)}

% \ChDiego{[I changed the proof of the theorem and removed the useless lemmas. Can you check if it makes sense now?]}

%\ChSofia{at some point, we have to mention politely that there is a gap in Leo's proof. Otherwise, the next theorem seems redundant.}\ChDiego{[I don't think the theorem is redundant, even in if  Leo-Tobias's proof was completely correct. Observe that their result is not enough for us since our groups can be three-generated, and in contrast our corollary suffices. Moreover our theorem handles with groups with an arbitrary number of non-central generators of class three, which, I think, has only a precedent in \cite{Her07}. I will, however, mention this mistake to Leo when I see him.  ]}
%\ChSofia{of course, I forgot their assumption with the 2 generators when writing this comment}

%\ChDiego{[Should \Cref{theorem:1} be in the introduction as our second main result after \Cref{theorem:p3}? ]}
%\ChSofia{we can mention that as a key ingredient, although the conditions do maybe look a bit technical}

\begin{theorem}\label{theorem:1}  Let $p$ be an odd prime number and let $F$ be the field with $p$ elements. Consider finite $p$-groups $G$ and $H$ and           
	           %$G$ and $H$ be finite $p$-groups such that $F G\cong F H$, and 
	           let $|G/\Phi(G)\ZZ(G)|=p^d$. If 
	 %          \ChSofia{maybe rename those to (a) and (b) for consistency and because we refer to the numbers of the relations in 3.3}\ChDiego{[Done]}
	           \begin{enumerate}[(a)]
		\item\label{lemma:presentation1}   $G^p\cap \gamma_2(G)\subseteq \gamma_2(G)^p \gamma_3(G)$ and 
		\item\label{lemma:presentation2}  $|\gamma_2(G)/\gamma_2(G)^p\gamma_3(G)|=p^{{d\choose 2}} ,$
	\end{enumerate}
then $G/\gamma_2(G)^p \gamma_4(G)\cong  H/\gamma_2(H)^p \gamma_4(H)$.
\end{theorem}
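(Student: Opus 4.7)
The plan is to work inside the common algebra $V(S_0)$, obtained by identifying $FG=FH$ via the assumed isomorphism, and to transport the presentation of $\widetilde G:=G/\gamma_2(G)^p\gamma_4(G)$ supplied by \Cref{lemma:presentation} onto $\widetilde H:=H/\gamma_2(H)^p\gamma_4(H)$. By \Cref{lemma:complement}, $V(S_0)=\pi_0(G)\rtimes A=\pi_0(H)\rtimes A$, so $|\widetilde G|=|\widetilde H|$ and it suffices to exhibit a surjective homomorphism $\widetilde G\to\widetilde H$ compatible with the presentation of \Cref{lemma:presentation}.

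First, \Cref{lemma:presentation} furnishes generators $g_1,\dots,g_k,c_1,\dots,c_m$ of $\widetilde G$ with relations (1)--(5) and structure constants $(n_i,\alpha_{ij},\beta_{ijls})$, where $g_{d+1},\dots,g_k$ may be chosen central. Because $\gamma_3(V(S_0))=\gamma_3(\pi_0(G))=\gamma_3(\pi_0(H))$ (this follows from \eqref{eq:gammaivs} after quotienting by $\gamma_4(\overline G)$) and this subgroup is central in $V(S_0)$ (since $\gamma_4(V(S_0))=1$), each image $\pi_0(c_j)$ already lies in $\pi_0(H)$ and defines an element $c_j'\in\widetilde H$. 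For each $g_i$, I decompose $\pi_0(g_i)=x_ia_i$ with $x_i\in\pi_0(H)$ and $a_i\in A$, and let $g_i'\in\widetilde H$ denote the preimage of $x_i$.

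Next, I verify the relations for the $g_i'$ and $c_j'$ using that $A$ is abelian and $[A,\pi_0(G)]\subseteq\gamma_3(V(S_0))$ (\Cref{lemma:definitiona}). Expanding commutators in $V(S_0)$ and discarding central factors gives $[x_i,x_j]\equiv\pi_0([g_i,g_j])\pmod{\gamma_3(V(S_0))}$; the $\gamma_3$-correction vanishes upon taking a further commutator with $x_l$ thanks to $\gamma_4(V(S_0))=1$, so $[[g_i',g_j'],g_l']=\pi_0([[g_i,g_j],g_l])$ exactly, producing relation (2) with the same constants $\beta_{ijls}$. Relation (3) transfers because $\gamma_2(V(S_0))\cong\gamma_2(\widetilde G)$ is elementary abelian (established in the proof of \Cref{lemma:presentation}). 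For the centrality relation (4), if $g_i$ is central in $\widetilde G$ then \Cref{lemma:definitiona} forces $\pi_0(g_i)\in\ZZ(V(S_0))$, hence $[g_i',g_j']\in\gamma_3(V(S_0))$; exploiting hypothesis~(b), which prescribes the exact rank of the commutator pairing on $\widetilde G/\Phi\ZZ\times\widetilde G/\Phi\ZZ\to\gamma_2/\gamma_3$, one shows these residual $\gamma_3$-contributions either vanish or can be eliminated by modifying $g_i'$ by suitable central elements of~$\widetilde H$ coming from $A\cap\ZZ(V(S_0))$.

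The main obstacle is the power relation (1). Applying the Hall--Petrescu formula in the $3$-nilpotent group $V(S_0)$, together with $A$ abelian, $A^p\subseteq\ZZ(V(S_0))$, $\gamma_2(V(S_0))$ elementary abelian, and $p$ odd, gives $(x_ia_i)^{p^{n_i}}=x_i^{p^{n_i}}a_i^{p^{n_i}}\delta_i$ for an explicit correction $\delta_i\in\gamma_3(V(S_0))\subseteq\pi_0(H)$. Comparing with $\pi_0(g_i)^{p^{n_i}}=\prod\pi_0(c_j)^{\alpha_{ij}}\in\pi_0(H)$ and invoking $A\cap\pi_0(H)=1$ forces $a_i^{p^{n_i}}=1$ and expresses $x_i^{p^{n_i}}$ as a product of the $c_j'$, with exponents matching $\alpha_{ij}$ up to a $\gamma_3$-shift absorbable by a change of basis among the $c_j'$. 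Combined with $|\widetilde G|=|\widetilde H|$, the assignment $g_i\mapsto g_i'$, $c_j\mapsto c_j'$ extends to the desired isomorphism $\widetilde G\cong\widetilde H$.
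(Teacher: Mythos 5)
Your overall architecture (decompose the generators of $\pi_0(G)$ inside $\VSd=\pi_0(H)\rtimes A$ and transport the presentation of \Cref{lemma:presentation}) is the same as the paper's, but there is a genuine gap at the point where the real work happens: the centrality relation~\eqref{4}. Writing $\pi_0(g_i)=x_ia_i$ with $x_i\in\pi_0(H)$, $a_i\in A$, centrality of $\pi_0(g_i)$ in $\VSd$ only gives $[x_i,y]=[y,a_i]^{a_i^{-1}}\in\gamma_3(\VSd)$ for $y\in\pi_0(H)$; that is, $x_i$ is central \emph{modulo} $\gamma_3$, not central, and nothing in your argument produces an honest central element of $\pi_0(H)$ replacing it. Your proposed repair --- modifying $g_i'$ by ``central elements of $\widetilde H$ coming from $A\cap \ZZ(\VSd)$'' --- cannot work, since $A\cap\pi_0(H)=1$: multiplying $x_i$ by a nontrivial element of $A$ takes you out of $\pi_0(H)$ altogether. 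The paper closes exactly this gap with an ingredient your proposal never invokes: the canonical decomposition $\ZZ(FG)=F\ZZ(G)+\ZZ(FG)\cap K(FG)$, which identifies $\ZZ(G)\Phi(G)/\Phi(G)$ with the subspace $(\ZZ(FG)\cap I(G)+I(G)^2)/I(G)^2$ of $I(G)/I(G)^2$, a subspace defined purely in terms of the algebra and hence equal to the corresponding object for $H$. Since $g_i$ has a central preimage in $G$ and $e_i-1\in I(G)^2$, this forces $h_i\in \ZZ(\pi(H))\,\Phi(\pi(H))$, which is what lets one replace $h_i$ by a genuinely central $\hat h_i$ while preserving the power relation. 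Without this (or an equivalent device) the matching of central generators, and hence the epimorphism onto $\pi_0(H)$, is not established.

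A secondary flaw: for the power relation~\eqref{1} you allow a correction $\delta_i\in\gamma_3$ and propose to absorb it ``by a change of basis among the $c_j'$.'' That move is not available, because rewriting the $c_j'$ changes the structure constants $\beta_{ijls}$ in relation~\eqref{2} as well, and the whole point is to realize the \emph{same} relation set $\mathcal R_{(n_i,\alpha_{ij},\beta_{ijls})}$ in $\pi_0(H)$. The correct observation is that $\delta_i$ is trivial: $[x_i,a_i]\in\gamma_3(\VSd)$ is central, so $\langle x_i,a_i\rangle$ has class at most $2$, and since $\gamma_2(\VSd)$ is elementary abelian and $p\mid\binom{p}{2}$ one gets $(x_ia_i)^{p^{n_i}}=x_i^{p^{n_i}}a_i^{p^{n_i}}$ on the nose; uniqueness in the semidirect decomposition then yields $x_i^{p^{n_i}}=\prod_j\pi_0(c_j)^{\alpha_{ij}}$ with the original exponents. (The paper sidesteps even this by passing to the quotient $V=\VSd/(A\cap\ZZ(\VSd))$, where the complement $E$ is elementary abelian.)
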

\begin{proof}
%	\ChSofia{here, I reordered the arguments because this automatically explains the line starting with $p^d$. I am not sure, though, whether it should be $G$ or $\pi''(G)$ (because in the end, we are also studying $\pi''(H)$).} \ChDiego{[Ok to the reordering. I think   with $G$ is alright. ]}
Consider the isomorphism of elementary abelian $p$-groups 
%\ChSofia{reference/citation} \ChDiego{We might cite \cite[Proposition III. 1.14 and Proposition III.1.15]{Seh78}}
$$\varphi \colon G/\Phi(G)\to I(G)/I(G)^2, \quad x\Phi(G)\mapsto (x-1) + I(G)^2$$
(see~\cite[Propositions~III.1.14 and~III.1.15]{Seh78}). 
We have an $F$-vector space decomposition $\ZZ(FG) = F\ZZ(G) + \ZZ(FG) \cap K(FG)$ (see \cite[Lemma 6.10]{Sandling85}). It is well-known that $FG \cdot K(FG) = K(FG) \cdot FG = I(\gamma_2(G)) FG$. As $\gamma_2(G) \subseteq \Phi(G) = (1 + I(G)^2) \cap G$ holds (see \cite[Theorem~5.5]{Jen41}),
 we have 
 \begin{equation}\label{eq:kg}
K(FG) \subseteq I(G') FG \subseteq I(G)^2.
 \end{equation} In particular, $\varphi$ restricts to an isomorphism
\begin{equation}\label{eq:isocenters}
\ZZ(G)\Phi(G)/\Phi(G) \to  \left( \ZZ(FG)\cap I(G)+I(G)^2 \right)/I(G)^2.
\end{equation}	
Of course, the analogous reasoning applies to $H$. In particular, we obtain
$$p^d=|G/\Phi(G)\ZZ(G)|=|I(G)/(\ZZ(FG) \cap I(G) + I(G)^2)|= |H/\Phi(H)\ZZ(H)|.$$
Moreover, note that the conditions \eqref{lemma:presentation1} and \eqref{lemma:presentation2} depend only on the quotient $G/\gamma_2(G)^p \gamma_3(G)$, which is determined by $FG$ by \cite{San89}. Thus, if $G$ satisfies \eqref{lemma:presentation1} and \eqref{lemma:presentation2}, then the analogous statements hold for $H$. 
 
Consider the group $V$ and the projection $\pi \colon V(FG) \to V$ introduced in \eqref{eq:defpi}. Let $\{g_1,g_2,\dots, g_k, c_1,\dots,c_m\}$ be a set of generators of $\pi(G)$ satisfying relations $$\mathcal R_{( n_i,\alpha_{ij},\beta_{ijls})} ( g_1,\dots, g_k,c_1,\dots, c_m) $$ as in \Cref{lemma:presentation}. For $1\leq i \leq k$, the decomposition $\VSdd =\pi(H)\rtimes E$ (see \eqref{eq:decompv}) guarantees that $g_i=h_i e_i$ for some unique $h_i\in \pi(H)$ and $e_i\in E$. Moreover $c_j \in \gamma_2(\pi(G))=\gamma_2(S)=\gamma_2(\pi(H))$  for $j=1,\dots, m$  (see~\eqref{eq:gammaivs}). Now let $d<i \leq k$, so $g_i \in Z(\pi(G))$. We have 
\[(g_i -1) + I(G)^2 = (h_i e_i-1)+ I(G)^2 = h_i -1 + e_i - 1 + I(G)^2 = (h_i-1) + I(G)^2,\] 
using  $e_i -1 \in I(G)^2$ (see \cref{lemma:definitiona}).  
%$e_i -1 \in I(G') \subseteq I(G)^2$ (see \Cref{lemma:definitiona} and \eqref{eq:kg}). 
In particular, this yields $h_i -1 + I(G)^2 \in (Z(FG) \cap I(G) + I(G)^2)/I(G)^2$. Using the isomorphism in \eqref{eq:isocenters}, and the analogue for $H$, we derive that $h_i =\hat h_i \widetilde h_i$, with $\hat h_i\in \ZZ(\pi(H))
$ and $\widetilde h_i\in\Phi (\pi( H)) = \gamma_2(\pi(H)) \pi(H)^p$. Observe that $\pi(H)^p$ is central in $\pi(H)$ by \Cref{lemma:AgemoCentral}, so we can furthermore assume that $\widetilde h_i \in \gamma_2(\pi(H))$.
Then $\{h_1,\dots, h_d, \hat h_{d+1}, \dots , \hat h_k,c_1,\dots, c_m\}$ forms a generating set of $\pi(H)$. We now show that it satisfies the relations $$\mathcal R_{ (n_i,\alpha_{ij},\beta_{ijls})} ( h_1,\dots, h_d, \hat h_{d+1}, \dots , \hat h_k,c_1,\dots, c_m). $$  Then~$H$ is an epimorphic image of $G$ of the same size and we obtain $G\cong H$ as desired. 
	  
	  We check the relations one by one. Relations \eqref{3}, \eqref{4} and \eqref{5} are immediate.   
	  For $1\leq i \leq k$, one has $$g_i^p=(h_ie_i)^p=h_i^pe_i^p [h_i,e_i]^p=h_i^p,$$ and, in particular for $d<i\leq k$, one has $g_i^p=h_i^p= \hat h_i^p \widetilde h_i^p = \hat h_i ^p$. Then condition \eqref{1} follows. Condition \eqref{2} also follows readily because, for  $1\leq i \leq d$  and arbitrary $x_i,x_j,x_l\in  A$, we have that 
	  \begin{equation}
	  [[g_ix_i, g_jx_j], g_lx_l] =[[g_i,g_j] [x_j, g_j][g_i,x_j] , g_l x_l ] =[[g_i,g_j], g_l x_l]=[[g_i,g_j],g_l].
	\end{equation}
This completes the proof. 
\end{proof}

The next result shows that \Cref{theorem:1} is a generalization of \cite[Theorem 1.2]{MM20}. 
%\ChSofia{maybe I got something wrong, but in my version, the source does not contain a Proposition 2.5}\ChDiego{[I had a very old version. Now it should agree with the published version.]}
\begin{corollary}\label{corollary:1}
	Let $p$ be an odd prime number, and let $G$ and $H$ finite $p$-groups such that $F G\cong F H$. If $|G/\Phi(G)\ZZ(G)|=p^2$, then $$G/\gamma_2(G)^p \gamma_4(G)\cong H/\gamma_2(H)^p\gamma_4(H).$$ 
\end{corollary}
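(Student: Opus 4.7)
The plan is to derive the corollary by verifying the hypotheses (a) and (b) of \Cref{theorem:1} in the case $d=2$ and then applying the theorem. Condition (b) is immediate: choose $x_1, x_2 \in G$ whose images form a basis of $G/\ZZ(G)\Phi(G)$; then $G = \langle x_1, x_2 \rangle \ZZ(G)$, so $\gamma_i(G) = \gamma_i(\langle x_1, x_2 \rangle)$ for $i \geq 2$, and the abelian group $\gamma_2(G)/\gamma_3(G)$ is cyclic, generated by the image of $[x_1, x_2]$. It is non-trivial since $G$ is non-abelian (as $d=2$) and nilpotent, so $|\gamma_2(G)/\gamma_2(G)^p\gamma_3(G)| = p = p^{\binom{2}{2}}$.

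The main obstacle is condition~(a), which need not hold automatically when $d=2$: for example, the non-abelian group of order $p^3$ and exponent $p^2$ satisfies $G^p \cap \gamma_2(G) \neq 1 = \gamma_2(G)^p \gamma_3(G)$. When (a) does hold, \Cref{theorem:1} applies directly.

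When (a) fails, the plan is to extend the argument of \Cref{theorem:1} by allowing the modified presentation relation $g_i^{p^{n_i}} = [g_1, g_2]^{\epsilon_i} \prod_j c_j^{\alpha_{ij}}$ in \Cref{lemma:presentation}. The key enabling fact is that, by \Cref{lemma:definitiona}, any commutator $[g, a]$ with $g \in \overline{G}$ and $a \in A$ is an iterated commutator of depth at least $3$, hence lies in $\gamma_3(V(S))$, and the analogous statement holds in $V$. In the set-up $V = \pi(G) \rtimes E = \pi(H) \rtimes E$ with $g_i = h_i e_i$, this implies $[g_1, g_2] \equiv [h_1, h_2] \pmod{\gamma_3(V)}$, so the modified relation transfers from $G$ to $H$ up to a correction in $\gamma_3$, which can be absorbed into the $c_j$-components. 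Carrying out this absorption consistently across all relations is the main technical step, after which the argument from the proof of \Cref{theorem:1} yields $G/\gamma_2(G)^p\gamma_4(G) \cong H/\gamma_2(H)^p\gamma_4(H)$ by cardinality.
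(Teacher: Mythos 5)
Your verification of condition (b) is fine, and you are right that condition (a) does not follow from $d=2$ alone --- your extraspecial example is correct. But there is a genuine gap in how you handle the failure of (a), and you miss the observation that makes the corollary short. The paper first disposes of the case where $G/\gamma_2(G)^p\gamma_4(G)$ has nilpotency class $2$: then $\gamma_3(G)\subseteq \gamma_2(G)^p\gamma_4(G)$ forces $\gamma_2(G)^p\gamma_4(G)=\gamma_2(G)^p\gamma_3(G)$, and this quotient is determined by $FG$ by Sandling's theorem \cite{San89} (note that by \eqref{eq:gammaivs} the class-$2$ property transfers to $H$). In the remaining case, where the class is exactly $3$, condition (a) holds \emph{automatically}: by \Cref{lemma:computation} and $p>2$, every $p$-th power is central modulo $\gamma_2(G)^p\gamma_4(G)$, whereas $[g_1,g_2]$ --- which generates $\gamma_2(G)$ modulo $\gamma_2(G)^p\gamma_3(G)$ --- is not central there; hence no element of $G^p\cap\gamma_2(G)$ can be congruent to a nontrivial power of $[g_1,g_2]$ modulo $\gamma_2(G)^p\gamma_3(G)$, i.e.\ $G^p\cap\gamma_2(G)\subseteq\gamma_2(G)^p\gamma_3(G)$. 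Your counterexample to (a) is precisely a class-$2$ group, consistent with this dichotomy.

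Your proposed workaround for the case where (a) fails --- admitting relations $g_i^{p^{n_i}}=[g_1,g_2]^{\epsilon_i}\prod_j c_j^{\alpha_{ij}}$ and transferring them to $H$ via $[g_1,g_2]\equiv[h_1,h_2]\pmod{\gamma_3}$ --- does not close the argument as stated. The correction term $w\in\gamma_3$ gets absorbed into the $c_j$'s, so $\pi(H)$ satisfies relations of the same \emph{shape} but with possibly different exponents $\alpha'_{ij}$; the conclusion $\widetilde G\cong\widetilde H$ in \Cref{theorem:1} relies on $\pi(H)$ satisfying the \emph{same} presentation $\mathcal R_{(n_i,\alpha_{ij},\beta_{ijls})}$ so that it is a quotient of $\widetilde G$ of equal order. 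Showing that the parameters can be normalized consistently (e.g.\ by re-choosing generators) is exactly the step you defer as ``the main technical step,'' and it is where the proof would actually have to do work. Since the only case requiring this machinery is the class-$2$ case already covered by \cite{San89}, the detour is both incomplete and unnecessary.
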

\begin{proof}
 If $G/ \gamma_2(G)^p \gamma_4(G)$ has nilpotency class $2$, then so does $H/\gamma_2(H)^p \gamma_4(H)$ (see \eqref{eq:gammaivs}) and hence the result follows by~\cite{San89}. Thus we assume otherwise. Observe that $\gamma_2(G)/\gamma_2(G)^p\gamma_3(G)$ has order $p$, as it is generated by $[g_1,g_2]\gamma_2(G)^p\gamma_3(G)$ if we consider a set of generators $\{g_1,\dots ,g_k,c_1,\dots, c_m\}$   as in \Cref{lemma:presentation}.  Observe that every $p$-power is central in $G/ \gamma_2(G)^p \gamma_4(G)$ since $[x,y^p] \equiv [x,y,y]^{{p\choose 2}} \equiv 1 \pmod{\gamma_2(H)^p \gamma_4(H)}$ for all $x,y \in G$ (by Lemma~\ref{lemma:computation} and using $p>2$). On the other hand, the image of $[g_1,g_2]$ in $G/\gamma_2(G)^p \gamma_4(G)$ is not central as~$G$ does not have nilpotency class $2$. This yields that $G^p \cap \gamma_2(G)\subseteq \gamma_2(G)^p \gamma_3(G)$. Thus the hypotheses of \Cref{theorem:1} with $d=2$ hold, and the statement follows. 
\end{proof}

\section{\texorpdfstring{Groups with a center of index $p^3$}{Groups with a center of index p^3}}\label{sec:indexp3}

Let $p$ be an odd prime number. In this section, we use the results about the small group algebra from the previous section to give a positive answer to the modular isomorphism problem for groups with center of index $p^3$. 
%\ChSofia{we should opt for either $\gamma_2$ or $G'$ in the next proof}\ChDiego{[I think there are only $\gamma_2$ now. ]}
 
\begin{lemma}\label{lemma:descriptionp3}
%	Let $p> 2$ be a prime number and l
%\ChSofia{I know that I voted for not putting the $p > 2$ here, but here we need it, so should we maybe put it again? I am not sure.}\ChDiego{[I'm neutral]}
	Let $p$ be an odd prime number and let $G$ be a  finite $p$-group with $|G:\ZZ(G)|=p^3$. Then:
	\begin{enumerate}
		\item The nilpotency class of $G$ is at most $3$.
		\item $\gamma_2(G)$ is elementary abelian. 
		\item Either $G$ has nilpotency class $2$ or $|G/\Phi(G)\ZZ(G)|=p^2$.
	\end{enumerate}
	
\end{lemma}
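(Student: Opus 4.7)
The plan is to prove the three items in the order (1), (3), (2), reserving the bulk of the work for part~(2).

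Part~(1) is immediate: since $|G/\ZZ(G)|=p^3$, the quotient $G/\ZZ(G)$ has nilpotency class at most~$2$, so $\gamma_3(G)\subseteq \ZZ(G)$ and $\gamma_4(G)=[\gamma_3(G),G]=1$. For part~(3), I would suppose that $G$ has class exactly~$3$, so $G/\ZZ(G)$ is nonabelian of order $p^3$; its center and commutator subgroup then both have order~$p$ and coincide, lifting to $\gamma_2(G)\ZZ(G)=\ZZ_2(G)$. Moreover, for $p$ odd every $p$-th power in a nonabelian group of order $p^3$ is central (because the center has order $p$ and the quotient by it is $C_p\times C_p$), hence $G^p\subseteq \ZZ_2(G)$. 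Combining, $\Phi(G)\ZZ(G) = G^p\gamma_2(G)\ZZ(G) = \ZZ_2(G)$, so $|G/\Phi(G)\ZZ(G)| = |G/\ZZ_2(G)| = p^2$.

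For part~(2), observe first that $[\gamma_2(G),\gamma_2(G)]\subseteq\gamma_4(G)=1$, so $\gamma_2(G)$ is automatically abelian; it suffices to show $[x,y]^p=1$ for all $x,y\in G$. I reduce this to the intermediate claim $G^p\subseteq\ZZ(G)$. Granting the claim, \Cref{lemma:computation} gives $[x,y]^p = [x^p,y]\cdot[[x,y],x]^{-\binom{p}{2}}$: the first factor vanishes because $x^p$ is central, and applying the same lemma with $c=[x,y]\in\gamma_2(G)$ in place of $x$ yields $[c,x]^p = [c,x^p] = 1$ (the correction term $[[c,x],x]$ is trivial since $[c,x]\in\gamma_3(G)\subseteq\ZZ(G)$). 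Hence $[[x,y],x]$ has order at most $p$, and since $p\mid\binom{p}{2}$ for $p$ odd the second factor is also trivial, giving $[x,y]^p = 1$.

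The remaining task, and the main obstacle, is to prove $G^p\subseteq\ZZ(G)$, i.e., that $G/\ZZ(G)$ has exponent~$p$. I argue by cases. In the class-$2$ case, I rule out $G/\ZZ(G)\cong C_{p^2}\times C_p$ directly: if $\bar a$ has order $p^2$ and $\bar b$ has order $p$, then $b^p\in\ZZ(G)$ and class-$2$ bilinearity give $[a,b]^p = [a,b^p] = 1$; hence $[a^p,b] = [a,b]^p = 1$, and since $G/\ZZ(G) = \langle\bar a,\bar b\rangle$ this forces $a^p\in\ZZ(G)$, contradicting the order of $\bar a$. In the class-$3$ case, I similarly rule out exponent~$p^2$ for $G/\ZZ(G)$: lifting the standard presentation of the nonabelian exponent-$p^2$ group of order $p^3$, write $a^{p^2}=v$, $b^p=u$ and $[a,b]=a^pz$ with $u,v,z\in\ZZ(G)$, and compare $b^{-p}ab^p$ (which equals $a$ because $b^p$ is central) with the closed form $b^{-k}ab^k = v^{\binom{k}{2}}a^{kp+1}z^{k+\binom{k}{2}p}$ obtained by induction on $k$ from the relation $b^{-1}ab = a^{p+1}z$. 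Setting $k=p$ and simplifying reduces the resulting identity to $(vz^p)^{1+\binom{p}{2}} = 1$; since $1+\binom{p}{2}\equiv 1\pmod p$ is a unit in the $p$-group $\ZZ(G)$, we conclude $vz^p = 1$. A direct calculation identifies $vz^p$ with $[[a,b],b]$, while $[[a,b],a] = [a^pz,a] = 1$ trivially, so $\gamma_3(G)$ would vanish, contradicting class~$3$. This inductive computation of $b^{-k}ab^k$ and the unit-coefficient cancellation that follows are the technical crux of the lemma.
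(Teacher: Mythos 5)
Your proof is correct, but for part~(2) it takes a genuinely different route from the paper. The paper never determines the exponent of $G/\ZZ(G)$: it only observes that each of the five groups of order $p^3$ admits generators $\bar a,\bar b_1,\bar b_2$ with $\bar b_1,\bar b_2$ of order dividing $p$, so that every generator of $\gamma_2(G)$ modulo $\gamma_3(G)$ has the form $[x,b_i]$ with $b_i^p\in\ZZ(G)$, whence $[x,b_i]^p=[x,b_i^p]=1$ by \Cref{lemma:computation}; combined with $\gamma_3(G)$ being elementary abelian (shown the same way) this gives~(2) in a few lines. You instead prove the stronger intermediate fact $G^p\subseteq\ZZ(G)$, i.e.\ that $C_{p^2}\times C_p$ and $C_{p^2}\rtimes C_p$ cannot occur as central quotients for $p$ odd --- a true statement (consistent with the classification of capable groups of order $p^3$) that your class-$2$ bilinearity argument and your inductive computation of $b^{-k}ab^k$ do establish correctly; I checked the closed form $b^{-k}ab^k=v^{\binom{k}{2}}a^{kp+1}z^{k+\binom{k}{2}p}$ and the identification $vz^p=[[a,b],b]$, and the unit-coefficient cancellation is valid since $1+\binom{p}{2}$ is coprime to $p$. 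The trade-off: your route is longer and relies on a presentation-specific calculation (plus the unstated but standard fact that $\gamma_3(G)=\langle[[a,b],a],[[a,b],b]\rangle$ when $G=\langle a,b\rangle\ZZ(G)$ and $\gamma_4(G)=1$), but it yields the sharper conclusion that $G/\ZZ(G)$ is either $C_p^3$ or extraspecial of exponent $p$; the paper's argument is more economical because it sidesteps exactly the case analysis you carry out. Your treatments of parts~(1) and~(3) essentially coincide with the paper's.
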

\begin{proof}
Examination of the upper central series yields that the nilpotency class of $G$ is at most 3. The quotient $G/\ZZ(G)$ is isomorphic to one of the five groups of order $p^3$, namely to one of $C_{p^3}$, $C_p^3$, $C_{p^2}\times C_p$,  $(C_p\times C_p)\rtimes C_p$ or $C_{p^2}\rtimes C_p$. Observe that $G/Z(G)$ is not cyclic as $G$ is abelian otherwise.
	
 	Next, we show that $\gamma_2(G)$ is elementary abelian. Then $\gamma_2(G)$ is abelian, as $[\gamma_2(G), \gamma_2(G)]\subseteq \gamma_4(G)=1$. Thus it suffices to show that the generators of $\gamma_2(G)$ have order $p$. Note that $\gamma_3(G)$ is elementary abelian.  Indeed, it is generated by  elements of the form $[x,y]$ with $x\in G$ and $y\in \gamma_2(G)$; since $\gamma_2(G)\ZZ(G)/\ZZ(G)$ has order at most $p$, we have $y^p\in \ZZ(G)$, so \Cref{lemma:computation} shows that $1=[x,y^p]=[x,y]^p $. 
 	Let  $a,b_1, b_2\in G$ be such that $G/\ZZ(G)=\GEN{a\ZZ(G), b_1 \ZZ(G),   b_2\ZZ(G)}$. By the structure of $G/\ZZ(G)$  we may assume that $ b_i^p\in \ZZ(G) $ for $i=1,2$. Then for every $x\in G$ and $i \in \{1,2\}$, \Cref{lemma:computation} yields that
	$$1=[x , b_i^p]= [x,b_i]^p [[x,b_i],b_i]^{{p\choose 2}} =[x,b_i]^p.$$
	This shows that $\gamma_2(G)$ is elementary abelian. 
	
	Finally, if $G$ has nilpotency class $3$, then $G/\ZZ(G)$ is isomorphic to one of the two non-abelian $p$-groups of order $p^3$. Then it is clear that $|\gamma_2(G)\ZZ(G)/\ZZ(G)|=p$, and hence $|G/\Phi(G)\ZZ(G)|=p^2$.
\end{proof} 

We can now prove our main result:
 
	\begin{theorem}\label{theorem:p3}
 Let $p$ be an odd prime number, let $F$ be the field with $p$ elements,  and 
		let $G$ and $H$ be  finite $p$-groups. Suppose that $|G:\ZZ(G)|=p^3$. If $FG\cong FH$, then $G\cong  H$. 
	\end{theorem}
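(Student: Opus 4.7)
The plan is to use the structural description of $G$ from \Cref{lemma:descriptionp3} to reduce to two well-understood situations, and then apply the determinacy results established earlier. The key observation is that in each of the two cases the relevant subgroup ($\gamma_2(G)^p\gamma_3(G)$ or $\gamma_2(G)^p\gamma_4(G)$) is already trivial, so the quotient appearing in the determinacy statement coincides with $G$ itself.

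By \Cref{lemma:descriptionp3}, $G$ has nilpotency class at most $3$, $\gamma_2(G)$ is elementary abelian, and either $G$ has class $2$ or $|G/\Phi(G)\ZZ(G)|=p^2$. I would split into these two cases accordingly, in each case applying the appropriate determinacy result and finishing by an order comparison using $|G|=|H|$.

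In the first case, $G$ has class $2$ with elementary abelian $\gamma_2(G)$, so $\gamma_2(G)^p\gamma_3(G)=1$ and thus $G=G/\gamma_2(G)^p\gamma_3(G)$. Sandling's result~\cite{San89} (already invoked in the proof of \Cref{theorem:1}) says that the quotient $G/\gamma_2(G)^p\gamma_3(G)$ is determined by $FG$, so
\[G\cong H/\gamma_2(H)^p\gamma_3(H).\]
Comparing orders then forces $\gamma_2(H)^p\gamma_3(H)=1$, yielding $G\cong H$. In the second case, $G$ has class $3$ (so $\gamma_4(G)=1$) and satisfies $|G/\Phi(G)\ZZ(G)|=p^2$; thus $\gamma_2(G)^p\gamma_4(G)=1$ and $G=G/\gamma_2(G)^p\gamma_4(G)$. \Cref{corollary:1} then yields
\[G\cong H/\gamma_2(H)^p\gamma_4(H),\]
and once again the order equality $|G|=|H|$ forces the kernel on the $H$-side to be trivial, giving $G\cong H$.

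The main technical obstacle has already been overcome in the previous section, namely in \Cref{theorem:1} and its consequence \Cref{corollary:1}. Given these, the proof of the theorem reduces to a genuinely short two-case bookkeeping built on top of \Cref{lemma:descriptionp3}. A minor subtlety worth verifying is that neither Sandling's determinacy result nor \Cref{corollary:1} requires any a priori structural hypothesis on $H$, so that the order comparison alone is sufficient to conclude that the relevant kernels on the $H$-side vanish.
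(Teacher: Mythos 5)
Your proposal is correct and follows essentially the same route as the paper: split into the two cases supplied by \Cref{lemma:descriptionp3}, apply Sandling's result in the class-$2$ case and \Cref{corollary:1} in the class-$3$ case, and use $|G|=|H|$ to force the kernel on the $H$-side to vanish. The paper is slightly terser (it does not spell out that $\gamma_2(G)^p\gamma_4(G)=1$ in the second case), but the argument is the same.
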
 
		 \begin{proof}
		 	By \Cref{lemma:descriptionp3}, we know that $G'$ is elementary abelian. If the nilpotency class of $G$ is $2$, then $G\cong H/\gamma_2(H)^p \gamma_3(H)$ by \cite{San89}, so $G\cong H$ due to $|G|=|H|$. Thus, by \Cref{lemma:descriptionp3}, we can assume that the nilpotency class of $G$ is $3$, and that $|G/\Phi(G)\ZZ(G)|=p^2$. By \Cref{corollary:1}, we then have $G\cong  H$.
		 \end{proof}
	 
	 \begin{remark}
We point out that there cannot be an analogue of \Cref{theorem:p3} for $p=2$, as in \cite{GarciaMargolisdelRio} non-isomorphic finite $2$-groups with centers of index $8$ and isomorphic group algebras over every field of characteristic $2$ are presented. Hence our result underlines the difference between the cases $p=2$ and $p>2$ for this problem (we refer to \cite{GLdRS2022} for other contrasts).  
	 \end{remark}
	 
%	 \ChSofia{maybe the next remark should be presented earlier (if this was not the case, then the whole paper would be meaningless)} \ChDiego{[I am not sure if these examples deserve all the trouble to check that they are non-isomorphic. If they stay, I'm okay with presenting this earlier. ]}
%	 

We conclude this section with examples illustrating that the class of $p$-groups with a center of index $p^3$ is not contained in one of the classes for which the modular isomorphism problem was previously known to have a positive answer. 
%We begin with some concrete examples:
%\ChDiego{[I think it is nicer to present the examples in pairs, so the reader can indeed be sure that but ours no known result allows to differentiate their group algebras. To me a group alone is not as meaningful. It might happen, as happens with your group  $G = \texttt{SmallGroup}(5^7,30087) $, that no other group is a candidate to be an alternative group basis for $kG$, which can be checked using the GAP method MIPBinsByGT in ModIsomExt.  I attach the list of bins for the groups in your list.]} \ChSofia{I agree. I didn't check with ModIsomExt, but of course this should be done.}
%
%
%\ChDiego{[I commented the old example and added a new one, using one of the resulting bins.]}
\begin{example}
$\null$
\begin{enumerate} 
 	\item Consider the following groups of order $5^7$: 
%\ChDiego{[Not sure about the big angles. We can delete   one of the groups if you prefer so.]} \ChSofia{I would include them all and I like the big angles. Maybe it makes it easier to read if one has some space between the groups, but I am not sure. }
	\begin{align*}
	G&=\GEN{a,b,c,d  \left| \begin{array}{c} 
			a^{625} = b^5=c^{25}=[b,a]^5=[c,a]=[c,b] =[d,a]=[d,b] =[d,c] =1, \\  a^5=d, 
			\left[  [b,a], a \right] = d^5, [[b,a],b]= c^5    	\end{array} \right. } \\ 
		&\\ 
		H&=    \GEN{a,b,c,d  \left| \begin{array}{c} 
				a^{625} = b^5=c^{25}=[b,a]^5=[c,a]=[c,b] =[d,a]=[d,b] =[d,c] =1, \\  a^5=d, 
				\left[  [b,a], a \right] = c^5, [[b,a],b]= d^5    	\end{array} \right. } \\
			&\\
			K&=	    \GEN{a,b,c,d  \left| \begin{array}{c} 
					a^{625} = b^5=c^{25}=[b,a]^5=[c,a]=[c,b] =[d,a]=[d,b] =[d,c] =1, \\  a^5=d^2, 
					\left[  [b,a], a \right] = c^5, [[b,a],b]= d^5    	\end{array} \right. }. 
	\end{align*} 
In {\sf GAP}, they are listed as ${ \rm \tt SmallGroup}(5^7, 1599)$, ${\rm \tt SmallGroup}(5^7,1734)$ and ${\rm \tt SmallGroup}(5^7,1766)$, respectively.
%	that is,  
%	\begin{align*}
%		G &= \texttt{SmallGroup}(5^7, 1599),  \\
%		  H&= \texttt{SmallGroup}(5^7, 1734), \\
%		 K&= \texttt{SmallGroup}(5^7, 1766 ).
%	\end{align*} 
	 Each of these groups is $3$-generated with minimal set of generators $\{a,b,c\}$. The center is given by $ \langle c,d \rangle \cong C_{25} \times C_{25}$, and hence it has index $p^3$, so Theorem~\ref{theo:maintheorema} applies to $G,H$ and $K$. On the other hand, a quick verification shows that none of these groups is   covered by any of the cases for which MIP is known (according to the survey in \cite{Mar22}). Moreover, these three groups agree in all  the group theoretical invariants implemented in the {\sf GAP} package ModIsomExt~\cite{GAP, ModIsomExt}.

\item 	
Let $p \geq 7$. In the list of groups of order $p^6$ given in \cite{NEW23}, consider the groups $G_{6,3}$ as well as $G_{6,5r}$ for $r \in \{1, \nu\}$. These groups have centers of index $p^3$ and do not lie in the union of classes of $p$-groups for which the modular isomorphism problem is known to have a positive answer. For small values of~$p$, one can verify computationally that these groups agree in all group theoretical invariants implemented in ModIsomExt. 
\end{enumerate}
\end{example}

\section*{Acknowledgments} 
The authors are grateful to Leo Margolis and Ángel del Río for their comments on an earlier version of this paper.

 \bibliographystyle{amsalpha}
\bibliography{MIP}

\end{document}